\documentclass[a4paper,12pt,reqno]{amsart}

\usepackage[headings]{fullpage}

\usepackage{amsfonts,graphics,amsmath,amsthm,amscd,amssymb,latexsym,euscript,enumerate,booktabs,mathrsfs}

\usepackage{epsfig}
\usepackage{flafter}
\usepackage[all,cmtip,line]{xy}
\usepackage{array}
\usepackage[english]{babel}
\usepackage{overpic}
\usepackage{subfig}
\usepackage{multirow}
\usepackage{microtype}
\usepackage{wrapfig}
\usepackage{longtable}
\usepackage{supertabular}
\usepackage{caption}
\usepackage{tikz}
\usetikzlibrary{positioning}
\usepackage{tikz-cd}
\usepackage{float}
\allowdisplaybreaks

\usepackage[dvipsnames,svgnames,table]{xcolor}
\usepackage{graphicx}
\usepackage{hyperref}
\hypersetup{
    colorlinks=true,    
    linkcolor=blue,          
    citecolor=blue,      
    filecolor=blue,      
    urlcolor=blue
}

\newtheorem{theorem}{Theorem}[section]
\newtheorem{lemma}[theorem]{Lemma}
\newtheorem{proposition}[theorem]{Proposition}

\newtheorem{question}[theorem]{Question}
\newtheorem*{theorem*}{Theorem}
\newtheorem*{question*}{Question}

\theoremstyle{plain}
\newtheorem{corollary}[theorem]{Corollary}

\theoremstyle{definition} 

\newtheorem{definition-lemma}[theorem]{Definition-Lemma}

\newtheorem{example}[theorem]{Example}

\newtheorem{remark}[theorem]{Remark}
\numberwithin{equation}{section}

\newcommand{\C}{\mathbb{C}}
\newcommand{\R}{\mathbb{R}}
\newcommand{\K}{\mathbb{K}}

\newcommand{\Q}{\mathbb{Q}}
\newcommand{\OO}{\mathcal{O}}

\newcommand{\bM}{\mathbf M}

\def\P{\mathbb{P}}

\newcommand{\A}{\mathbb{A}}
\newcommand{\F}{\mathbb{F}}
\DeclareMathOperator{\ord}{ord}

\DeclareMathOperator{\Hom}{Hom}
\DeclareMathOperator{\Cent}{Cent}

\DeclareMathOperator{\Stab}{Stab}

\DeclareMathOperator{\Pic}{Pic}

\DeclareMathOperator{\Nm}{Nm}

\DeclareMathOperator{\im}{im}

\def\Pic{\operatorname{Pic}}

\def\Spec{\operatorname{Spec}}

\def\mult{\operatorname{mult}}

\def\cha{\operatorname{char}}

\DeclareMathOperator{\divisor}{div}
\DeclareMathOperator{\trdeg}{trdeg}

\def\Cent{\operatorname{Cent}}
\def\Supp{\operatorname{Supp}}

\DeclareMathOperator{\Gal}{Gal}

\def\Nklt{\operatorname{Nklt}}

\usepackage{mathtools}

\DeclarePairedDelimiterX{\norm}[1]{\lVert}{\rVert}{#1}

\title[Semiample perturbations over arbitrary fields]
{Semiample perturbations over arbitrary fields}
\author[D.-W. Lee]{Dae-Won Lee}
\address{School of Mathematics, Korea Institute for Advanced Study,
85 Hoegiro, Dongdaemun-gu, Seoul 02455, Republic of Korea}
\email{daewonlee@kias.re.kr}
\thanks{The author is supported by a KIAS Individual Grant (MG111201)
at Korea Institute for Advanced Study.}
\date{\today}
\subjclass[2020]{14G17, 14J30, 14E30}
\keywords{Semiample divisor, log canonical pair, imperfect field, generalized pair}

\begin{document}

\begin{abstract}
Assuming projective log resolutions, we construct semiample perturbations over arbitrary fields that preserve all log canonical places. As an application, for divisor classes with finitely generated section rings, we characterize the existence of effective divisors yielding lc or klt pairs in terms of asymptotic orders of vanishing. We also derive minimal model and semiampleness results for generalized threefolds over $F$-finite fields of characteristic $p>5$.
\end{abstract}
\maketitle

\section{Introduction}

Let $(X,\Delta)$ be a projective log canonical pair and let $D$ be a semiample $\K$-Cartier $\K$-divisor, where $\K\in\{\Q,\R\}$. In characteristic zero, Bertini's theorem gives an effective divisor $D'\sim_{\K}D$ with $(X,\Delta+D')$ lc. If $(X,\Delta)$ is klt, then $D'$ can be chosen with $(X,\Delta+D')$ klt.

In positive characteristic, a general member of a base point free linear system on a regular variety need not be regular. Over a finite field, a nonempty open subset of a linear system need not contain a member defined over the ground field. Tanaka proved semiample perturbation over an $F$-finite field containing an infinite perfect subfield, assuming a log resolution \cite[Theorem 1]{Tan17}. Tanaka stated that it was not known whether his perturbation theorem holds over finite fields \cite[\S1, ``Assumption on the base field'']{Tan17}. This leads to the following question.

\begin{question}\label{ques:tanaka}
Let $(X,\Delta)$ be a projective lc pair over a finite field admitting a projective log resolution. Let $D$ be a semiample $\K$-Cartier $\K$-divisor, where $\K\in\{\Q,\R\}$ and $\Delta$ is a $\K$-divisor. Does there exist an effective divisor $D'\sim_{\K}D$ such that $(X,\Delta+D')$ is lc? If $(X,\Delta)$ is klt, then can $D'$ be chosen such that $(X,\Delta+D')$ is klt?
\end{question}

Theorem \ref{thm:main} answers Question \ref{ques:tanaka} in a more general setting: it holds over an arbitrary field for projective sub-lc subpairs and gives a uniform discrepancy bound for all prime divisors over $X$.

\begin{theorem}\label{thm:main}
Let $(X,\Delta)$ be a projective subpair over a field $k$. Assume that $(X,\Delta)$ is sub-lc and admits a projective log resolution. Let $D$ be a semiample $\K$-Cartier $\K$-divisor, where $\K\in\{\Q,\R\}$. For every $0<\epsilon<1$, there exists an effective $\K$-Cartier $\K$-divisor $D_\epsilon\sim_{\K}D$ such that
\begin{equation}\label{eq:main-bound}
(1-\epsilon)A_{X,\Delta}(E)
\leq A_{X,\Delta+D_\epsilon}(E)
\leq A_{X,\Delta}(E)
\end{equation}
for every prime divisor $E$ over $X$.
\end{theorem}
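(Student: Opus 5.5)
The plan is to avoid Bertini entirely and use the semiample morphism, choosing members of the linear system by Frobenius-type or high-multiple averaging. Since only finitely many prime divisors on a log resolution matter, a "finite avoidance" argument then suffices.

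\emph{Step 1: reductions.} Write $D=\sum a_i D_i$ with $a_i\in\K_{>0}$ and $D_i$ semiample Cartier; in the $\R$ case this uses that semiampleness of an $\R$-divisor is witnessed by such a decomposition. It then suffices to treat each $D_i$ with a small share of $\epsilon$. Two facts make this work. First, $A_{X,\Delta+D'}(E)=A_{X,\Delta}(E)-\ord_E(D')$, so the upper bound in \eqref{eq:main-bound} is automatic from $D'\ge0$. Second, the lower bound only requires $\ord_E(D')\le \epsilon A_{X,\Delta}(E)$. So we may assume $D$ is base point free Cartier and seek $D'\in|mD|/m$ with small orders.

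\emph{Step 2: the log resolution.} Fix a projective log resolution $f\colon Y\to X$ of $(X,\Delta)$. Then $f^*\Delta_Y:=K_Y-f^*(K_X+\Delta)$ is snc with coefficients $\le1$, and $f^*D$ is base point free on $Y$. We construct $G\in|f^*mD|$ such that:
\begin{itemize}
\item $G$ contains no stratum of the snc divisor $\Supp\Delta_Y\cup\operatorname{Exc}(f)$;
\item $G$ has multiplicity at most $1$ at the generic points of all such strata.
\end{itemize}
Then for any $E$ over $X$, with center on $Y$ inside a minimal stratum $S$, toric-type computations on snc pairs give $\ord_E(G)\le \ord_E(\mathfrak m_S)$. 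Comparing with $A_{Y,\Delta_Y}(E)\ge \ord_E(\mathfrak m_S)\cdot c$ yields $\ord_E(G/m)\le C\,A_{X,\Delta}(E)/m$. This is a linear bound with a constant depending only on the finitely many coefficients $<1$. Components with coefficient exactly $1$, the lc places, are never contained in $G$, so for them $\ord_E(G)=0$ when the center is such a stratum. Taking $m\gg 1/\epsilon$ gives \eqref{eq:main-bound}. Here the fraction $G/m$ dilutes multiplicities, so regularity of $G$ is not needed. This is the key point over finite fields.

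\emph{Step 3: existence of $G$ over arbitrary $k$, the main obstacle.} Over finite $k$, a general member may not exist. Instead, use the morphism $\phi\colon X\to Z\subset\P^N$ given by $|D|$. We need a hypersurface section $H\subset\P^N$ of degree $m$, defined over $k$, avoiding finitely many prescribed points: the images of the generic points of the strata, which are finitely many closed or non-closed points. We also need $H$ to have multiplicity $\le1$ at those points, after pulling back. By Poonen-type avoidance, or more elementarily, a degree-$m$ form nonvanishing at finitely many points exists for $m$ large. For this, use the surjectivity of $H^0(\OO(m))\to\bigoplus \kappa(p_j)$ for $m\gg0$ and pick the value $1$ at each point. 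Then $\phi^*H$ avoids all strata, and the multiplicity condition is vacuous. Thus $G=f^*\phi^*H$ works, and in fact contains no stratum. Then $\ord_E(G)=0$ for every $E$ centered in a stratum.

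The remaining care is for $E$ whose center on $Y$ is not the generic point of a stratum. Here I would bound $\ord_E(G)\le\ord_E$ of the ideal of the center times the multiplicity. I expect the hardest part to be getting a uniform linear comparison with $A_{X,\Delta}(E)$, rather than the existence of $H$. I would handle it by choosing $G$ to avoid, in addition, the images of generic points of strata of $\Supp\Delta_Y^{=1}$ intersected with any component of $\operatorname{Exc}$. Dividing by $m$ then makes the bound $\epsilon A$ uniform.
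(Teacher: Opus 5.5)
Your proposal has a genuine gap, and it is exactly the case you defer at the end of Step 3. Choosing $G$ to avoid finitely many points only gives $\ord_E G=0$ for those $E$ whose center on $Y$ \emph{is} a stratum. The asserted bound $\ord_E(G)\le\ord_E(\mathfrak m_S)$ for $E$ merely centered \emph{inside} a stratum is false. Dividing by $m$ also dilutes nothing: a member of $|mf^*D|$ can have multiplicity, or order of contact with $S_Y$, of size about $m$ at a closed point.

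For a concrete failure, take $X=Y=\P^2_k$ with coordinates $[x:y:z]$, $\Delta=\{y=0\}$, $D$ a line, and $G_m=\{yz^{m-1}=x^m\}\in|mD|$. This $G_m$ is defined over $k$ and does not contain the only stratum $\{y=0\}$, so it meets every requirement of your Steps 2 and 3. Let $E$ be the exceptional divisor of the weighted blow-up of $[0:0:1]$ with weight $1$ on $x$ and $m$ on $y$. Then $A_{X,\Delta}(E)=(1+m)-m=1$ and $\ord_E G_m=m$. Hence $A_{X,\Delta+G_m/m}(E)=0<(1-\epsilon)A_{X,\Delta}(E)$ for every $m$. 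With $\Delta=0$ on $\P^n$, a degree-$m$ cone behaves the same way at the blow-up of its vertex: $\ord_E(G/m)=1$ against $A_X(E)=n$, which violates the bound once $\epsilon<1/n$. Your proposed fix of avoiding further generic points cannot help. The example has no exceptional locus and a single stratum, and the bad behaviour occurs at a closed point.

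The bound you need, $\ord_E(G/m)\le\epsilon A_{Y,\Delta_Y}(E)$ for all $E$, is equivalent to $(Y,\Delta_Y+\frac{1}{\epsilon m}G)$ being sub-lc. That is, the log canonical threshold of $G$ with respect to $(Y,\Delta_Y)$ must be at least $\frac{1}{\epsilon m}$, whereas for an arbitrary member of $|mf^*D|$ it can be of order $1/m$. Securing this requires controlling the singularities of $G$, and its position relative to $S_Y$, at all closed points at once. That is a Bertini-type input, not an avoidance statement.

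The paper supplies this input in Proposition~\ref{prop:regular} by producing $H\sim_{\K}nP_Y$ with $(Y,S_Y+H)$ lc, in three steps:
\begin{enumerate}[(i)]
\item It applies Tanaka's theorem over $k\overline{\F}_p$, which is $F$-finite and contains an infinite perfect field.
\item It descends to a finite Galois extension, averages the conjugates $\frac1d\sum_\sigma\sigma^*H$, and descends the linear equivalence to $k$ by the norm.
\item For non-$F$-finite $k$, it uses Murayama's $\Gamma$-construction followed by purely inseparable descent through $p$-th powers of the rational functions.
\end{enumerate}
Then $\ord_E H\le A_{Y,S_Y}(E)\le A_{Y,\Delta_Y}(E)$ for every $E$, with a constant independent of $n$, and dividing by $n\ge1/\epsilon$ gives the theorem.

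A smaller point: in Step 3, the map $H^0(\P^N,\OO(m))\to\bigoplus_j\kappa(p_j)$ cannot be surjective when some $p_j$ is non-closed, since $\kappa(p_j)$ is then infinite-dimensional over $k$. You would need to pick closed points in the closures $\overline{\{p_j\}}$ instead.
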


The inequalities in \eqref{eq:main-bound} preserve sub-lc and sub-klt singularities and all lc places. Both the divisor $D_\epsilon$ and the linear equivalence $D_\epsilon\sim_{\K}D$ are defined over $k$.

The proof begins with a projective log resolution $\pi\colon Y\to X$ and a reduced simple normal crossing boundary on $Y$. In characteristic $p>0$, we first pass to a separable algebraic extension where Tanaka's theorem applies. After descending the perturbing divisor to a finite Galois extension, we average its Galois conjugates and descend the resulting linear equivalence to the ground field by the norm. Murayama's $\Gamma$-construction \cite[Corollary 3.7(i)]{Mur21} and purely inseparable descent then give the result over an arbitrary field. Perturbing an integer multiple of $\pi^*D$ and dividing by that multiple gives \eqref{eq:main-bound} on the log resolution.

Gongyo--Nakamura--Tanaka proved klt perturbation for nef and big $\Q$-divisors in dimension at most three over perfect fields \cite[Lemma 2.8]{GNT19}. Das proved simultaneous lc and klt perturbation over fields of positive characteristic containing an infinite perfect subfield \cite[Theorem 3.1]{Das20}. Tanaka obtained lc and klt perturbations after a purely transcendental base change \cite[Corollary 4.20]{Tan24}. 

Theorem \ref{thm:main} also gives effective representatives by perturbing the semiample part of a birational decomposition. For a finitely generated section ring, the fixed part on a suitable model computes the \emph{asymptotic order of vanishing}, which is defined as  $v_E(\|D\|)\coloneqq\inf\{\ord_E G\mid G\geq0,\ G\sim_{\Q}D\}$ \cite[Definition 5.5]{Mur21}. Here, $D$ is $\Q$-Cartier with $\kappa(D)\geq0$, and $E$ is a prime divisor over $X$. For $q>0$ with $qD$ Cartier, write $\mathfrak b(|qD|)$ for the base ideal of $|qD|$.

\begin{theorem}\label{thm:fixed}
Let $(X,\Delta)$ be a projective subpair over a field $k$, and let $D$ be a $\Q$-Cartier divisor with $\kappa(D)\geq0$. Assume that there is an integer $q>0$ such that $qD$ is Cartier, $R(X,qD)$ is generated in degree one, and $(X,\Delta,\mathfrak b(|qD|))$ admits a projective log resolution. Then an effective divisor $G\sim_{\Q}D$ with $(X,\Delta+G)$ sub-lc exists if and only if
\begin{equation}\label{eq:criterion}
A_{X,\Delta}(E)\geq v_E(\|D\|)
\qquad\text{for every prime divisor }E\text{ over }X.
\end{equation}
An effective divisor $G\sim_{\Q}D$ with $(X,\Delta+G)$ sub-klt exists if and only if all inequalities in \eqref{eq:criterion} are strict.
\end{theorem}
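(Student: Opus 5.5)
The necessity direction is formal: if $G\sim_{\Q}D$ is effective with $(X,\Delta+G)$ sub-lc, then for every prime divisor $E$ over $X$ we have $0\leq A_{X,\Delta+G}(E)=A_{X,\Delta}(E)-\ord_E G\leq A_{X,\Delta}(E)-v_E(\|D\|)$, using $\ord_E G\geq v_E(\|D\|)$ by definition of the asymptotic order of vanishing. In the sub-klt case the first inequality is strict, so all inequalities in \eqref{eq:criterion} are strict. The substance is sufficiency, and I plan to prove it by passing to a log resolution on which $D$ splits into a fixed part and a semiample part, and then applying Theorem \ref{thm:main} to the semiample part.

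Let $\pi\colon Y\to X$ be a projective log resolution of $(X,\Delta,\mathfrak b(|qD|))$. Then $\mathfrak b(|qD|)\cdot\OO_Y=\OO_Y(-F_1)$ with $F_1$ effective and $\Supp F_1\cup\Supp\pi^{-1}_*\Delta\cup\operatorname{Exc}(\pi)$ simple normal crossing, and $\pi^*(qD)=M_1+F_1$ with $|M_1|$ base point free. Since $R(X,qD)$ is generated in degree one, the base ideals satisfy $\mathfrak b(|mqD|)=\mathfrak b(|qD|)^m$, so the fixed part of $|m\pi^*(qD)|$ is exactly $mF_1$ for all $m\geq1$. Writing $F=F_1/q$ and $M=M_1/q$, this gives $\pi^*D=M+F$ with $M$ semiample, and
\[
v_E(\|D\|)=\ord_E F
\]
for every prime divisor $E$ on $Y$. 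Hence $v_E(\|D\|)=\ord_E F$ also for every prime divisor $E$ over $X$, since $\ord_E$ of a general element of $|mqD|$ equals $\ord_E(\pi^*\mathfrak b(|qD|)^m)=m\,\ord_E F_1$. I expect this identification, including over a finite or imperfect field where ``general member'' must be handled with care, to be the main obstacle. I would argue directly with ideals: $\ord_E G\geq\ord_E\mathfrak b(|mqD|)/(mq)$ for any $G\in|mqD|$, with equality attained by the valuative order of the ideal itself, and the infimum is realized in the limit by $\Q$-linear combinations of sections, which are defined over $k$.

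Now set $\Delta_Y$ by $K_Y+\Delta_Y=\pi^*(K_X+\Delta)$, and consider the subpair $(Y,\Delta_Y+F)$. Its components lie in an SNC divisor and its coefficients are $1-A_{X,\Delta}(E)+\ord_E F\leq1$ by \eqref{eq:criterion} applied to the divisors on $Y$. So $(Y,\Delta_Y+F)$ is sub-lc, and it is sub-klt if the inequalities are strict. It is its own projective log resolution. Applying Theorem \ref{thm:main} to this subpair and the semiample $\Q$-Cartier divisor $M$ with some $\epsilon\in(0,1)$ yields an effective $M_\epsilon\sim_{\Q}M$ satisfying \eqref{eq:main-bound}. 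In particular $(Y,\Delta_Y+F+M_\epsilon)$ is sub-lc, and it is sub-klt in the strict case, because $(1-\epsilon)A>0$ whenever $A>0$.

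Finally, set $G:=\pi_*(F+M_\epsilon)$. It is effective and $\Q$-linearly equivalent to $\pi_*\pi^*D=D$. Moreover $\pi^*G=F+M_\epsilon$, because $F+M_\epsilon\sim_{\Q}\pi^*D$ and the difference $\pi^*G-(F+M_\epsilon)$ is $\pi$-exceptional and $\Q$-linearly trivial over $X$, hence zero by the negativity lemma. It follows that $A_{X,\Delta+G}(E)=A_{Y,\Delta_Y+F+M_\epsilon}(E)$ for all $E$, so $(X,\Delta+G)$ is sub-lc, respectively sub-klt.
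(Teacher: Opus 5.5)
Your proposal is correct and follows essentially the same route as the paper. The identity $\mathfrak b(|\ell qD|)=\mathfrak b(|qD|)^\ell$ gives $\pi^*D=M+F$ with $F$ computing $v_E(\|D\|)$, and then Theorem \ref{thm:main} is applied to $(Y,\Delta_Y+F)$ and $M$ followed by pushforward; the paper packages this last step as Proposition \ref{prop:decomposition}.

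The step you flag as the main obstacle is handled in the paper without any "general member". Since $\OO_Y(M_1)$ is generated by pullbacks of sections defined over $k$, some section does not vanish at the generic point of $\Cent_Y E$. Also, sufficiency only uses the easy inequality $v_E(\|D\|)\geq\ord_E F$. Finally, $\pi^*G=F+M_\epsilon$ follows directly by pushing forward and pulling back the principal divisors, so you do not need the negativity lemma.
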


The inequalities in \eqref{eq:criterion} imply that $(X,\Delta)$ is sub-lc. If strict inequalities hold, then $(X,\Delta)$ is sub-klt. Example \ref{ex:finite-generation} shows that the lc criterion can fail without finite generation. Section \ref{subsubsec:anticanonical} gives anticanonical applications and compares the construction with potential log discrepancies \cite[Proposition 1.4 and Theorem 5.10]{CP16}.

The perturbation theorem also gives ordinary boundaries for generalized pairs. Write $M_X$ for the trace of the nef b-divisor $\bM$. Under the positivity assumptions below, a klt boundary $\Gamma$ satisfies $K_X+\Gamma\sim_{\K}c(K_X+B+M_X)$ for some $c>0$. The minimal model theorem for klt pairs \cite[Theorem 1.6]{DW22}, followed by comparison of discrepancies on a common model, then gives a generalized minimal model.

\begin{theorem}\label{thm:threefold}
Let $(X,B+\bM)$ be a projective generalized klt threefold over an $F$-finite field of characteristic $p>5$. Assume that $B$ is a $\K$-divisor and $\bM$ is a nef $\K$-Cartier b-divisor, where $\K\in\{\Q,\R\}$.
\begin{enumerate}[(1)]
\item Suppose that $K_X+B+M_X$ is pseudoeffective.
If there exist a model of descent $\pi\colon Y\to X$ and $t\in\K$ with $t>-1$ such that
\begin{equation}\label{eq:shifted-positivity}
M_Y+t\pi^*(K_X+B+M_X)
\end{equation}
is semiample or nef and big, then $(X,B+\bM)$ has a minimal model.
\item Suppose that $K_X+B+M_X$ is big.
Then $(X,B+\bM)$ has a minimal model. If $\K=\Q$, then $(X,B+\bM)$ has a good minimal model and $R(X,r(K_X+B+M_X))$ is finitely generated for every sufficiently divisible integer $r>0$. If $\K=\Q$ and $K_X+B+M_X$ is nef and big, then $K_X+B+M_X$ is semiample.
\end{enumerate}
\end{theorem}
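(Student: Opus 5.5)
The plan is to turn the generalized pair into an ordinary klt pair using Theorem \ref{thm:main}, and then import the known minimal model results for klt threefolds. Fix a model of descent $\pi\colon Y\to X$, taken to be a projective log resolution of $(X,B)$ on which $\bM$ descends; such resolutions exist for threefolds over $F$-finite fields of characteristic $p>5$. Write
\begin{equation*}
K_Y+B_Y+M_Y=\pi^*(K_X+B+M_X).
\end{equation*}
Generalized klt means that $(Y,B_Y)$ is sub-klt.

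For part (1), set $L\coloneqq M_Y+t\pi^*(K_X+B+M_X)$, which by hypothesis is semiample or nef and big.

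\emph{Semiample case.} Apply Theorem \ref{thm:main} on $Y$ to the sub-klt subpair $(Y,B_Y)$ and the divisor $L$. This gives an effective $L_\epsilon\sim_{\K}L$ with $(Y,B_Y+L_\epsilon)$ sub-klt. Then
\begin{equation*}
K_Y+B_Y+L_\epsilon\sim_{\K}(1+t)\pi^*(K_X+B+M_X).
\end{equation*}
Push forward to $X$ and put $\Gamma\coloneqq\pi_*(B_Y+L_\epsilon)$. Since $K_Y+B_Y+L_\epsilon$ is $\K$-linearly equivalent to a pullback, we get $K_Y+B_Y+L_\epsilon=\pi^*(K_X+\Gamma)$ up to a principal divisor. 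Hence $(X,\Gamma)$ is klt, using that $\Gamma\ge0$ because $\pi_*B_Y=B\ge0$. Moreover $K_X+\Gamma\sim_{\K}c(K_X+B+M_X)$ with $c=1+t>0$.

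\emph{Nef and big case.} Write $L\sim_{\K}A+F$ with $A$ ample and $F\ge0$ small; this uses Kodaira's lemma on $Y$. Then replace $L$ by $(1-\delta)L+\delta A$ plus the small effective divisor $\delta F$. For $\delta$ small the subpair $(Y,B_Y+\delta F)$ remains sub-klt, after possibly further blowing up so that $F$ has snc support. We then proceed as in the semiample case.

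Next, $K_X+\Gamma$ is pseudoeffective, so the minimal model theorem for klt threefolds in characteristic $p>5$ over $F$-finite fields \cite[Theorem 1.6]{DW22} gives a minimal model $\phi\colon X\dashrightarrow X'$ of $(X,\Gamma)$. I claim $X'$ is a minimal model of $(X,B+\bM)$. Take a common resolution $p\colon W\to X$, $q\colon W\to X'$, on which $\bM$ also descends. The two log divisors differ only by the factor $c$ and a principal divisor. So the negativity lemma comparison
\begin{equation*}
p^*(K_X+B+M_X)=q^*(K_{X'}+B'+M_{X'})+E
\end{equation*}
with $E\ge0$ and $\Supp E$ containing the $\phi$-exceptional divisors follows from the corresponding statement for $\Gamma$ after dividing by $c$. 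Nefness of $K_{X'}+B'+M_{X'}$ follows the same way.

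For part (2), bigness of $K_X+B+M_X$ lets us choose $t>0$ small so that $M_Y+t\pi^*(K_X+B+M_X)$ is big and nef up to a small effective part. Precisely, write $\pi^*(K_X+B+M_X)\sim_{\K}H+N$ with $H$ ample on $Y$ and $N\ge0$. Then $M_Y+tH$ is nef and big, and $tN$ can be absorbed into the boundary while keeping sub-klt for $t\ll1$. The argument of (1) then produces a klt $(X,\Gamma)$ with $K_X+\Gamma\sim_{\K}c(K_X+B+M_X)$ and $K_X+\Gamma$ big. For $\K=\Q$ all choices can be made rational. The klt threefold theory for big log divisors (\cite{DW22} and the base point free theorem there) gives a good minimal model, finite generation of $R(X,r(K_X+\Gamma))$, and semiampleness in the nef and big case. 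These transfer to $K_X+B+M_X$ via the $\Q$-linear equivalence, for sufficiently divisible $r$.

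The main obstacle is the bookkeeping in the nef and big case. We must absorb the effective part from Kodaira's lemma into the boundary while preserving sub-klt on $Y$, which may need an extra log resolution so that Theorem \ref{thm:main} applies with snc support. We must also make sure the pushed-forward $\Gamma$ is effective.
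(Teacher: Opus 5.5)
Your proposal follows essentially the same route as the paper. You perturb the semiample part of a model of descent on $Y$, or in the nef and big case the ample part from Kodaira's lemma with $\delta F$ absorbed into the boundary. This gives a klt $\Gamma\geq B$ with $K_X+\Gamma\sim_{\K}c(K_X+B+M_X)$. You then apply \cite[Theorem 1.6]{DW22}, transfer back by the negativity lemma, and for $\K=\Q$ use the base point free theorem \cite[Theorem 1.4]{DW22}.

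Two details are worth tightening, and the paper handles both explicitly. First, the identity $\pi^*(K_X+\Gamma)=K_Y+B_Y+L_\epsilon$ holds exactly, not merely up to a principal divisor, by \eqref{eq:birational-pullback}. Exact equality is what makes $(X,\Gamma)$ klt; equality up to a principal divisor would not suffice.

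Second, your transfer step assumes that $\phi$ is a birational contraction with $\Gamma'=\phi_*\Gamma$. The paper justifies this rather than taking it for granted. It applies \cite{DW22} over $k_X=H^0(X,\OO_X)$, which is finite over $k$ and hence $F$-finite. It then shows $R_W=p^*(K_X+\Gamma)-q^*(K_{X'}+\Gamma')\geq0$ by the negativity lemma. Any divisor extracted by $\phi$ would have coefficient one in $\Gamma'$, which would force $\mult_{E_W}R_W=-A_{X,\Gamma}(E)<0$, a contradiction.
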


Taking $t=0$ in Theorem \ref{thm:threefold}(1) gives a minimal model when $\bM$ is b-semiample or b-big. The ground field may be imperfect, and $X$ need not be $\Q$-factorial. The boundary constructions hold over arbitrary fields under the specified resolution assumptions; the minimal model step requires the $F$-finite assumption in \cite[Theorem 1.6]{DW22}. Corollary \ref{cor:arbitrary-field-semiample} gives semiampleness over an arbitrary field with an explicit resolution assumption.

When $B$ is a $\Q$-divisor, $\bM$ is a nef $\Q$-Cartier b-divisor, and $X$ is $\Q$-factorial over a perfect field, the good minimal model assertion in Theorem \ref{thm:threefold}(2) follows from \cite[Lemma 2.6]{BF25} with $\alpha=\beta=0$. Over an algebraically closed field of characteristic $p>5$, Yang--Ye--Zhang proved termination of pseudoeffective flips and existence of log minimal models for NQC lc generalized threefolds \cite[Theorems 1.1 and 1.2]{YYZ26}. 

The rest of the paper is organized as follows. Section \ref{sec:preliminaries} fixes the notation and proves descent under separable and purely inseparable extensions. Section \ref{subsec:perturbations} proves Theorem \ref{thm:main}, and Section \ref{subsec:fixed} treats fixed parts, Theorem \ref{thm:fixed}, and anticanonical boundaries. Section \ref{subsec:generalized} is devoted to proving Theorem \ref{thm:threefold}. 

\section{Preliminaries}\label{sec:preliminaries}

\subsection{Notation}\label{subsec:notation}

A \emph{variety} is an integral separated scheme of finite type over a field. Throughout the paper, $k$ is a field and $\K\in\{\Q,\R\}$. For $\cha k=p>0$, the field $k$ is \emph{$F$-finite} if the absolute Frobenius morphism $F_k\colon\Spec k\to\Spec k$ is finite, equivalently, $[k\colon k^p]<\infty$, where $k^p\coloneqq\{a^p\mid a\in k\}$.

In characteristic $p>0$, choose a separable closure $k^{\mathrm{sep}}$ and an embedding $\overline{\F}_p\hookrightarrow k^{\mathrm{sep}}$ extending $\F_p\subset k$. Let $k_\infty\coloneqq k\overline{\F}_p =\bigcup_{r\geq1}k\F_{p^r}$ be the compositum of $k$ and $\overline{\F}_p$ in $k^{\mathrm{sep}}$. Each extension $k\F_{p^r}/k$ is finite separable, and $k\F_{p^r}\subseteq k\F_{p^s}$ whenever $r\mid s$.

For a field extension $L/k$, put $X_L\coloneqq X\times_k\Spec L$ and let $g_L\colon X_L\to X$ be the projection. Write $D_L\coloneqq g_L^*D$ when the pullback is defined. A normal scheme $Z$ with finitely many irreducible components is a disjoint union $Z=\coprod_\alpha Z_\alpha$ of integral normal schemes. Define $Q(Z)\coloneqq\prod_\alpha k(Z_\alpha)$. If $U=\Spec A\subset Z$ meets every irreducible component, then $Q(Z)$ is the total quotient ring of $A$, obtained by inverting all nonzero divisors. For $u=(u_\alpha)_\alpha\in Q(Z)^\times$, define $\divisor_Z(u)|_{Z_\alpha}\coloneqq\divisor_{Z_\alpha}(u_\alpha)$. Divisors, linear equivalence, and the singularities on $Z$ are understood componentwise.

For a finite Galois extension $L/k$, the isomorphism
\[
L\otimes_kL\simeq\prod_{\sigma\in\Gal(L/k)}L,
\qquad
a\otimes b\longmapsto(a\sigma(b))_\sigma,
\]
shows that $X_L\to X$ is a finite \'etale $\Gal(L/k)$-torsor, i.e., the natural morphism
\[
\coprod_{\sigma\in\Gal(L/k)}X_L
\longrightarrow X_L\times_X X_L,
\qquad
x\longmapsto (x,\sigma(x))
\]
on the component indexed by $\sigma$ is an isomorphism.

A \emph{prime divisor over $X$} is a prime divisor $E$ on a normal birational model $h\colon W\to X$. Write $\Cent_X E$ for the center of $E$ on $X$. For $C=\sum_P c_PP$, put $\mult_P C\coloneqq c_P$. For an $\R$-Cartier divisor $D$ on $X$, define $\ord_E D\coloneqq\mult_E(h^*D)$. For a nonzero coherent ideal $\mathfrak a\subseteq\OO_X$, the integer $\ord_E\mathfrak a$ is determined by $\mathfrak a\OO_{W,E}=(t_E^{\ord_E\mathfrak a})$, where $t_E$ is a uniformizer of the discrete valuation ring $\OO_{W,E}$, i.e., a generator of its maximal ideal. For an $\R$-Cartier divisor $D_Y$ on a normal birational model $Y\to X$, choose a normal common model $\rho\colon W'\to Y$ and $W'\to W$. Write $E'$ for the strict transform of $E$ on $W'$, and define $\ord_E D_Y\coloneqq\mult_{E'}(\rho^*D_Y)$.

For $\K$-divisors, \emph{$\K$-linear equivalence} means $D_1-D_2=\sum_j c_j\divisor_X(u_j)$, where $c_j\in\K$ and $u_j\in k(X)^\times$. A divisor of the form $\sum_j c_j\divisor_X(u_j)$ is called \emph{$\K$-principal}. A $\Q$-Cartier $\Q$-divisor $D$ is \emph{semiample} if there exists an integer $m>0$ such that $mD$ is Cartier and $\OO_X(mD)$ is generated by global sections. An $\R$-Cartier $\R$-divisor $D$ is semiample if $D\sim_{\R}\sum_j b_jL_j$, where $b_j>0$ and each $L_j$ is a semiample Cartier divisor.

Let $f\colon X\to Z$ be a projective morphism. A $\Q$-Cartier $\Q$-divisor $D$ is \emph{$f$-semiample} if there exists an integer $m>0$ such that $mD$ is Cartier and
\[
f^*f_*\OO_X(mD)\longrightarrow\OO_X(mD)
\]
is surjective. An $\R$-Cartier $\R$-divisor $D$ is $f$-semiample if $D\sim_{\R}\sum_j b_jL_j$, where $b_j>0$ and each $L_j$ is an $f$-semiample Cartier divisor. For $\K$-Cartier $\K$-divisors $D_1$ and $D_2$, write $D_1\sim_{\K,Z}D_2$ if $D_1-D_2\sim_{\K}f^*A_Z$ for a $\K$-Cartier $\K$-divisor $A_Z$ on $Z$.

Let $D$ be a $\Q$-Cartier $\Q$-divisor and let $q>0$ be an integer such that $qD$ is Cartier. The \emph{section ring} is $R(X,qD)\coloneqq\bigoplus_{\ell\geq0}H^0(X,\OO_X(\ell qD))$. The \emph{base ideal} of $|qD|$ is
\[
\mathfrak b(|qD|)\coloneqq \im\!\left(H^0(X,\OO_X(qD))\otimes_k\OO_X(-qD) \longrightarrow\OO_X\right).
\]
All global sections and complete linear systems are taken over $k$. The notation $\kappa(D)\geq0$ means that $|qD|\ne\varnothing$ for some integer $q>0$ such that $qD$ is Cartier.

A \emph{subpair} $(X,\Delta)$ consists of a normal variety $X$ and an $\R$-divisor $\Delta$ such that $K_X+\Delta$ is $\R$-Cartier. A \emph{pair} is a subpair with $\Delta\geq0$. For a proper birational morphism $h\colon W\to X$ of normal varieties, define $\Delta_W$ by $K_W+\Delta_W=h^*(K_X+\Delta)$. For a prime divisor $E$ on $W$, the \emph{log discrepancy} is $A_{X,\Delta}(E)\coloneqq1-\mult_E\Delta_W$. A subpair is \emph{sub-lc} (resp. \emph{sub-klt}), if $A_{X,\Delta}(E)\geq 0$ (resp. $A_{X,\Delta}(E)>0$) for every prime divisor $E$ over $X$. For pairs, the terms are \emph{log canonical} (lc) and \emph{klt}. A prime divisor $E$ with $A_{X,\Delta}(E)=0$ is called an \emph{lc place}. For any subpair, define the \emph{non-klt locus} by
\[
\Nklt(X,\Delta)\coloneqq
\bigcup_{A_{X,\Delta}(E)\leq0}\Cent_X E,
\]
where the union is taken over all prime divisors over $X$. If $h\colon W\to X$ is a projective log resolution, then $\Nklt(X,\Delta)=\bigcup_{\mult_E\Delta_W\geq1}h(E)$, where $E$ runs through the prime divisors on $W$. Since $h$ is proper and $\Delta_W$ has finite support, $\Nklt(X,\Delta)$ is closed. For an $\R$-Cartier divisor $D$, we have
\begin{equation}\label{eq:discrepancy-order}
A_{X,\Delta+D}(E)=A_{X,\Delta}(E)-\ord_E D.
\end{equation}

A \emph{common log resolution} of divisors $\Delta_1,\ldots,\Delta_s$ on a normal variety $X$ is a projective birational morphism $\pi\colon Y\to X$ with $Y$ regular such that the reduced divisor consisting of all $\pi$-exceptional prime divisors and the strict transforms of the prime divisors contained in $\bigcup_i\Supp\Delta_i$ is SNC. A log resolution of $(X,\Delta,\mathfrak a_1,\ldots,\mathfrak a_r)$ also satisfies $\mathfrak a_i\OO_Y=\OO_Y(-F_i)$ for effective Cartier divisors $F_i$, and the reduced divisor consisting of all $\pi$-exceptional prime divisors, the strict transforms of the prime divisors in $\Supp\Delta$, and the prime divisors in $\bigcup_i\Supp F_i$ is SNC. If $Y$ is regular and $C_Y$ has SNC support, then $(Y,C_Y)$ is sub-lc (resp.\ sub-klt) if and only if every coefficient of $C_Y$ is at most one (resp.\ less than one). In particular, if $S_Y$ is a reduced SNC divisor and $C_Y\leq S_Y$, then $A_{Y,S_Y}(E)\leq A_{Y,C_Y}(E)$ for every prime divisor $E$ over $Y$.

A common log resolution of several divisors resolves the union of the supports of the divisors. By \cite[Theorem 2.7]{DW22}, projective log resolutions exist in dimension at most three over an $F$-finite field of positive characteristic. For finitely many ideals, successive blow-ups make the pullback ideals invertible, and a log resolution then resolves the divisors defined by the pullback ideals. 

\subsection{Separable extensions and descent}\label{subsec:descent}

Separable algebraic extensions preserve sub-lc and sub-klt singularities and projective log resolutions. For a finite Galois extension, averaging conjugate divisors then gives an effective representative over the ground field.

\begin{lemma}\label{lem:separable}
Let $L/k$ be a separable algebraic extension and let $(X,\Delta)$ be a subpair. Then $X_L$ is normal, and $(X_L,\Delta_L)$ is sub-lc (resp.\ sub-klt) if and only if $(X,\Delta)$ is sub-lc (resp.\ sub-klt). For a projective log resolution $\pi\colon Y\to X$ and an irreducible component $V\subseteq X_L$, the restriction $\pi_L^{-1}(V)\to V$ of the base change $\pi_L\colon Y_L\to X_L$ is a projective log resolution of $(V,\Delta_L|_V)$.
\end{lemma}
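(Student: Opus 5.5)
The plan is to reduce everything to the fact that $g_L\colon X_L\to X$ is a filtered limit of finite étale morphisms. Write $L=\bigcup_i L_i$ as a union of finite separable subextensions $L_i/k$. Each $X_{L_i}\to X$ is finite étale, because $\Spec L_i\to\Spec k$ is. Étale morphisms preserve normality, and a filtered inverse limit of normal schemes along affine flat transition maps is normal, since local rings are filtered colimits of normal domains. Hence $X_L$ is normal. The same argument shows that $Y_L$ is regular: regularity ascends along étale maps, and a filtered colimit of regular local rings with flat local transition maps and $\mathfrak m$ extending to $\mathfrak m$ is regular and noetherian, since it is essentially of finite type over $L$. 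Each irreducible component $V$ of $X_L$ is also a connected component, because $X_L$ is normal. So $\pi_L^{-1}(V)\to V$ is projective, and it is birational because birationality is preserved by flat base change. Moreover $K_{X_L}=g_L^*K_X$, because $\Omega_{L/k}=0$ and the relative dualizing sheaf is trivial for étale base change.

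Next I would check the SNC condition. The reduced exceptional-plus-strict-transform divisor $S_Y$ pulls back to a reduced divisor $S_{Y_L}$. The exceptional locus of $\pi_L$ is the preimage of $\operatorname{Exc}(\pi)$, and the prime divisors in $\Supp\Delta_L$ are exactly the components of the pullbacks of the prime divisors in $\Supp\Delta$. SNC means that the components, and all their intersections, are regular of the expected codimension. By the previous paragraph, regularity is preserved under base change by a separable algebraic extension. A component of the pullback of a regular prime divisor is a connected component of a regular scheme, so the components stay regular and their intersections remain regular. Thus $\pi_L^{-1}(V)\to V$ is a projective log resolution of $(V,\Delta_L|_V)$.

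For the discrepancy statement, write $K_Y+\Delta_Y=\pi^*(K_X+\Delta)$. Pullback commutes with flat base change, so $K_{Y_L}+(\Delta_Y)_L=\pi_L^*(K_{X_L}+\Delta_L)$. Each prime $E$ on $Y$ pulls back to a reduced sum of primes, because the base change is étale-type and hence unramified, and multiplicities are preserved. So the coefficients of $(\Delta_Y)_L$ are exactly those of $\Delta_Y$, and every coefficient of $\Delta_Y$ appears, since $g_L$ is surjective. By the SNC criterion recorded in the notation subsection, both sub-lc and sub-klt are read off from the coefficients on a log resolution: all coefficients must be $\leq1$, resp.\ $<1$. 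This gives the equivalence.

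The main obstacle is the infinite case, where one must make sure that the limit arguments for normality and regularity are rigorous. I would handle it by noting that every statement is local and involves only finitely many data. Each local ring of $X_L$ is a localization of $\OO_{X_{L_i}}\otimes_{L_i}L$, which is a filtered union of étale extensions. Regularity of noetherian local rings passes to such unions via an ind-étale argument, for instance by citing that ind-étale algebras over regular rings are regular.
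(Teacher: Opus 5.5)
Your proofs that $X_L$ is normal, that $Y_L$ is regular, and of the SNC/log resolution assertion are fine. Your ind-\'etale limit argument does the same job as the paper's citation that $\Spec(R\otimes_kL)\to\Spec R$ is a regular morphism.

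\textbf{The gap is in the discrepancy equivalence.} The lemma asserts that $(X_L,\Delta_L)$ is sub-lc (resp.\ sub-klt) if and only if $(X,\Delta)$ is, for an \emph{arbitrary} subpair. The log resolution $\pi$ only enters the last sentence. You read both conditions off the coefficients of $\Delta_Y$ on a projective log resolution, so you only prove the equivalence for subpairs that admit one. This restriction matters for how the lemma is used. In Proposition \ref{prop:galois} it is applied to $(X_{L_0},(\Delta_i)_{L_0}+H_{L_0})$, $(X_L,g^*\Delta_i+\sigma^*H)$ and $(X,\Delta_i+H_0)$. There $H$ comes from Tanaka's theorem followed by Galois averaging. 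In the setting of Proposition \ref{prop:regular} (arbitrary dimension, positive characteristic), nothing guarantees a log resolution of $\Supp(S_Y+H)$.

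\textbf{How to repair it.} The direction ``$(X_L,\Delta_L)$ sub-lc $\Rightarrow$ $(X,\Delta)$ sub-lc'' is easy to fix. Run your multiplicity-one argument on an arbitrary normal birational model $W\to X$ instead of $Y$. Since $W_L$ is normal, $K_{W_L}=g_W^*K_W$ for $g_W\colon W_L\to W$, and $\kappa(E)\otimes_kL$ is reduced, a prime $F\subset W_L$ over $E\subset W$ satisfies $A_{X_L,\Delta_L}(F)=A_{X,\Delta}(E)$.

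The converse needs an idea your proposal lacks. A prime divisor $F$ over a component $V\subseteq X_L$ lives on a normal model of $V$ that need not be the base change of any model of $X$. You must show every such $F$ is realized on some base-changed model with the same log discrepancy. The paper does this in two steps.
\begin{enumerate}
\item A limit argument descends $V$, the model, and $F$ to a finite subextension $L_\alpha/k$ without changing the log discrepancy. This uses that $\kappa(F_\alpha)\otimes_{L_\alpha}L$ is reduced.
\item For $L_\alpha/k$ finite, let $v$ be the restriction of $\ord_F$ to $k(X)$. The residue field of $\ord_F$ is finite over that of $v$, so the latter has transcendence degree $\dim X-1$, and $v$ is divisorial. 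Hence $v=e\ord_E$ for a prime $E$ on some normal model $W\to X$. The component of $W_{L_\alpha}$ with function field $k(V)$ is the normalization of $W$ in $k(V)$. So $F$ is a prime divisor on it lying over $E$, and \'etaleness forces $e=1$ and equal log discrepancies.
\end{enumerate}
This valuation-theoretic step is the real content of the infinite case, not the limit arguments for normality and regularity that you identified as the main obstacle.
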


\begin{proof}
By \cite[Tag 0C3M]{Stacks}, the scheme $X_L$ is normal. Suppose first that $L/k$ is finite. Let $h\colon W\to X$ be a normal birational morphism, let $h_L\colon W_L\to X_L$ be the base change of $h$, and let $g_W\colon W_L\to W$ be the projection. Since $L/k$ is finite separable, $g_W$ is finite \'etale. Write $K_W+\Delta_W=h^*(K_X+\Delta)$. Then we have
\[
K_{W_L}+g_W^*\Delta_W =g_W^*(K_W+\Delta_W) =h_L^*(K_{X_L}+\Delta_L).
\]
Let $F$ be a prime divisor on $W_L$ with $g_W(F)=E$, where $E$ is a prime divisor on $W$. The local homomorphism $\OO_{W,E}\to\OO_{W_L,F}$ is \'etale, and a uniformizer of $\OO_{W,E}$ has order one in $\OO_{W_L,F}$. Consequently, $\mult_F g_W^*\Delta_W=\mult_E\Delta_W$, and
\begin{equation}\label{eq:etale-discrepancy}
A_{X_L,\Delta_L}(F)
=1-\mult_F g_W^*\Delta_W
=1-\mult_E\Delta_W
=A_{X,\Delta}(E).
\end{equation}

Now, let $F$ be a prime divisor over an irreducible component $V\subseteq X_L$. The field extension $k(V)/k(X)$ is finite, and the restriction $v\coloneqq\ord_F|_{k(X)^\times}$ has residue field satisfying $\trdeg_k\kappa(v)=\dim X-1$. Indeed, the residue field of $\ord_F$ is finite over $\kappa(v)$. Choose a normal birational model $h\colon W\to X$ and a prime divisor $E\subset W$ such that $v=e\ord_E$ for an integer $e>0$. The component of $W_L$ with function field $k(V)$ is the normalization of $W$ in $k(V)$. The valuation $\ord_F$ is therefore represented by a prime divisor on $W_L$ mapping onto $E$. Since $W_L\to W$ is \'etale, $e=1$, and \eqref{eq:etale-discrepancy} holds for $F$. Every prime divisor $E$ on a normal model $W\to X$ also has a prime divisor on $W_L$ mapping onto $E$. Thus, $(X_L,\Delta_L)$ is sub-lc (resp.\ sub-klt) if and only if $(X,\Delta)$ is sub-lc (resp.\ sub-klt) when $L/k$ is finite.

Assume that $(X,\Delta)$ is sub-lc (resp.\ sub-klt). Let $L/k$ be an arbitrary separable algebraic extension, and write $L=\varinjlim_\alpha L_\alpha$ with $L_\alpha/k$ finite. Fix a prime divisor $F$ on a normal birational model $W\to V$ of an irreducible component $V\subseteq X_L$. By \cite[Tag 01ZM]{Stacks}, after enlarging a finite subextension $L_\alpha/k$ if necessary, the data $V$, $W\to V$, and $F$ descend to an irreducible component $V_\alpha\subseteq X_{L_\alpha}$, a birational morphism $h_\alpha\colon W_\alpha\to V_\alpha$, and a prime divisor $F_\alpha\subset W_\alpha$ such that $(V_\alpha)_L\simeq V$, $(W_\alpha)_L\simeq W$, and $(F_\alpha)_L\simeq F$. Since $(W_\alpha)_L\simeq W$ is normal and $(W_\alpha)_L\to W_\alpha$ is faithfully flat, \cite[Tag 033G]{Stacks} gives normality of $W_\alpha$. Let $\Delta_\alpha\coloneqq\Delta_{L_\alpha}|_{V_\alpha}$ and write $K_{W_\alpha}+C_\alpha=h_\alpha^*(K_{V_\alpha}+\Delta_\alpha)$. The separability of $L/L_\alpha$ implies that $\kappa(F_\alpha)\otimes_{L_\alpha}L$ is reduced. Hence, a uniformizer at $F_\alpha$ has order one at $F$, and by base change, we have
\[
A_{X_L,\Delta_L}(F) =1-\mult_F(C_\alpha)_L =1-\mult_{F_\alpha}C_\alpha =A_{V_\alpha,\Delta_\alpha}(F_\alpha).
\]
The finite extension case gives $(V_\alpha,\Delta_\alpha)$ sub-lc (resp.\ sub-klt). Hence, $A_{X_L,\Delta_L}(F)=A_{V_\alpha,\Delta_\alpha}(F_\alpha)\geq0$ (resp.\ $>0$). Since $F$ was arbitrary, $(X_L,\Delta_L)$ is sub-lc (resp.\ sub-klt).

For the reverse implication, let $E$ be a prime divisor on a normal birational model $h\colon W\to X$. Choose a prime divisor $F\subset W_L$ mapping onto $E$. The algebra $\kappa(E)\otimes_kL$ is reduced, and the coefficient of $F$ in $g_W^*E$ is one. The crepant equality on $W_L$ therefore gives $A_{X_L,\Delta_L}(F)=A_{X,\Delta}(E)$. Hence, $(X_L,\Delta_L)$ sub-lc (resp.\ sub-klt) implies that $(X,\Delta)$ is sub-lc (resp.\ sub-klt).

By \cite[Tags 07EQ and 07C1]{Stacks}, the morphism $\Spec(R\otimes_kL)\to\Spec R$ is regular for every finite type $k$-algebra $R$. By \cite[Tag 031E]{Stacks}, $Y_L$ and the base changes of all intersections of components of the SNC divisor on $Y$ are regular. Thus, the boundary and exceptional locus on $Y_L$ have SNC support. Let $U\subseteq X$ be a dense open subset over which $\pi$ is an isomorphism. Each component $V\subseteq X_L$ meets $U_L$, and $\pi_L$ is an isomorphism over $U_L$. Since projectivity is preserved under base change, $\pi_L^{-1}(V)\to V$ is a projective birational morphism from a regular variety with the SNC support. Therefore, $\pi_L^{-1}(V)\to V$ is a projective log resolution of $(V,\Delta_L|_V)$.
\end{proof}

The field $k_\infty$ contains $\overline{\F}_p$, which is an infinite perfect field, as required in \cite[Theorem 1]{Tan17}. The $F$-finiteness of $k_\infty$ reduces to the behavior of the imperfection degree under separable algebraic extensions.

\begin{lemma}\label{lem:constant-field}
Let $k$ be a field of characteristic $p>0$ and let $L/k$ be a separable algebraic extension. Then $L=L^pk$ and $[L\colon L^p]=[k\colon k^p]$. In particular, $k_\infty/k$ is separable algebraic and $[k_\infty\colon k_\infty^p]=[k\colon k^p]$.
\end{lemma}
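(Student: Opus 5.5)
The plan is to prove $L=L^pk$ first and then get the degree equality from linear disjointness. For the first claim, take $a\in L$. Since $L/k$ is separable algebraic, $a$ lies in a finite separable subextension $k(a)/k$. The subfield $k(a^p)\subseteq k(a)$ satisfies two properties: $a$ is a root of $T^p-a^p\in k(a^p)[T]$, so $k(a)/k(a^p)$ is purely inseparable, and it is also separable as a subextension of the separable extension $k(a)/k$. Hence $k(a)=k(a^p)$, and so $a\in k(a^p)\subseteq kL^p$. This gives $L=L^pk$.

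For the degree, apply the Frobenius isomorphism $L\to L^p$, $x\mapsto x^p$. It carries $k$ onto $k^p$, so $L^p/k^p$ is again separable algebraic, and we have $[L\colon L^p]=[L^pk\colon L^p]$. Now choose a $p$-basis, or more simply any basis $\{b_i\}$ of $k$ over $k^p$, with $|\{b_i\}|=[k\colon k^p]$, which may be infinite. The $b_i$ span $L=L^pk$ over $L^p$. The step I expect to be the main one is their linear independence over $L^p$, i.e.\ the linear disjointness of $k$ and $L^p$ over $k^p$. The field $k$ is purely inseparable over $k^p$ and $L^p$ is separable algebraic over $k^p$, and a separable algebraic extension and a purely inseparable extension of a common field are linearly disjoint. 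To justify this, reduce to a finite subextension $M\subseteq L^p$, which we may take Galois over $k^p$ after enlarging inside a separable closure. Then $k\otimes_{k^p}M$ is a field: it is reduced because $M/k^p$ is separable, and it is local Artinian because $\Spec(k\otimes_{k^p} M)\to\Spec M$ is a homeomorphism, being a base change of the universal homeomorphism $\Spec k\to\Spec k^p$. So $k\otimes_{k^p}M\to kM$ is injective. Passing to the direct limit gives the injectivity of $k\otimes_{k^p}L^p\to L$. Hence $\{b_i\}$ is an $L^p$-basis of $L$, and $[L\colon L^p]=[k\colon k^p]$.

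For the last assertion, note that each $k\F_{p^r}$ is generated over $k$ by the separable elements of $\F_{p^r}$, which are roots of $T^{p^r}-T$, a polynomial with nonzero derivative. Therefore $k_\infty=\bigcup_r k\F_{p^r}$ is separable algebraic over $k$, and the general statement applies to $L=k_\infty$.
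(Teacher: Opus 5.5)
Your proof is correct and follows the paper's route. You first show that $L/L^pk$ is both separable and purely inseparable (elementwise, via $k(a)=k(a^p)$), and then you use the linear disjointness of $k$ and $L^p$ over $k^p$ to turn a $k^p$-basis of $k$ into an $L^p$-basis of $L$. The paper only asserts that disjointness, and your tensor-product justification of it is sound; the Galois enlargement is unnecessary, since the argument only uses that $M/k^p$ is finite separable.
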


\begin{proof}
The extension $L/(L^pk)$ is separable and purely inseparable, hence $L=L^pk$. The extension $L^p/k^p$ is separable algebraic, whereas $k/k^p$ is purely inseparable. Therefore, $L^p$ and $k$ are linearly disjoint over $k^p$. A $k^p$-basis of $k$ is an $L^p$-basis of $L^pk=L$, even when the basis is infinite. Thus, $[L\colon L^p]=[k\colon k^p]$. The last assertion follows from the separability of $k_\infty/k$.
\end{proof}

For a separable algebraic extension, the divisor and the rational functions defining the linear equivalence descend to a finite subextension. Taking the average of the conjugate divisors preserves the discrepancy inequalities, while the norm descends the linear equivalence to $k$.

\begin{proposition}\label{prop:galois}
Let $K/k$ be a separable algebraic field extension, let $X$ be a normal variety, and let $(X,\Delta_i)$ be subpairs for $1\leq i\leq s$. Let $D$ be a $\K$-Cartier $\K$-divisor. If $H_K\geq0$, $H_K\sim_{\K}D_K$, and every $(X_K,(\Delta_i)_K+H_K)$ is sub-lc, then there exists an effective $\K$-Cartier $\K$-divisor $H_0\sim_{\K}D$ such that every $(X,\Delta_i+H_0)$ is sub-lc. If $(X_K,(\Delta_i)_K+H_K)$ is sub-klt for an index $i$, then $(X,\Delta_i+H_0)$ is sub-klt.
\end{proposition}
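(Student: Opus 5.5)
First descend everything to a finite Galois subextension. Write $H_K-D_K=\sum_j c_j\divisor_{X_K}(u_j)$ with $c_j\in\K$ and $u_j\in Q(X_K)^\times$. Only finitely many data are involved: the prime components of $H_K$ with their coefficients, and the functions $u_j$. By \cite[Tag 01ZM]{Stacks} they are defined over some finite subextension $L_0\subseteq K$. Enlarge $L_0$ to a finite Galois extension $L/k$ inside a separable closure containing $K$, with $L\subseteq$ the Galois closure. This gives $H_L\geq0$ on $X_L$ and $H_L-D_L=\sum_jc_j\divisor_{X_L}(u_j)$ with $u_j\in Q(X_L)^\times$. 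Since $H_K=(H_L)_{K'}$ after passing to a common field (replace $K$ by the compositum $KL$, still separable algebraic), Lemma \ref{lem:separable}, applied to the separable extension $KL/L$ componentwise, shows that each $(X_L,(\Delta_i)_L+H_L)$ is sub-lc. It is sub-klt for those $i$ where the original pair over $K$ was. Note that $H_L$ is $\K$-Cartier, because $H_L-D_L$ is $\K$-principal and $D_L$ is $\K$-Cartier.

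Next, average over $G=\Gal(L/k)$, which acts on $X_L$ over $X$. Set $H'\coloneqq\frac1{|G|}\sum_{\sigma\in G}\sigma^*H_L$. Then $H'$ is $G$-invariant and effective. Since $(\Delta_i)_L$ and $D_L$ are pulled back from $X$, each $\sigma^*$ fixes them. Hence every $(X_L,(\Delta_i)_L+\sigma^*H_L)$ is sub-lc (resp.\ sub-klt), because $\sigma$ is an automorphism of $X_L$. By linearity of $\mult$ on a common model, \eqref{eq:discrepancy-order} gives
\[
A_{X_L,(\Delta_i)_L+H'}(F)=\tfrac1{|G|}\textstyle\sum_\sigma A_{X_L,(\Delta_i)_L+\sigma^*H_L}(F),
\]
so convexity preserves sub-lc and sub-klt. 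A $G$-invariant Weil divisor on $X_L$ descends: $H'=(H_0)_L$ for a unique effective $\K$-divisor $H_0$ on $X$. Its coefficients are read off from $g_L^*P$, which is the reduced sum of a $G$-orbit since $g_L$ is étale. Lemma \ref{lem:separable}, applied in the finite case to $(X,\Delta_i+H_0)$ whose base change is $(X_L,(\Delta_i)_L+H')$, gives sub-lc (resp.\ sub-klt) on $X$.

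It remains to show that $H_0\sim_{\K}D$ over $k$, and I expect this to be the main point. Averaging the relation gives
\[
(H_0-D)_L=\tfrac1{|G|}\textstyle\sum_j c_j\sum_\sigma\divisor(\sigma^*u_j)=\tfrac1{|G|}\sum_jc_j\divisor_{X_L}(\Nm(u_j)).
\]
Here $\Nm(u_j)=\prod_\sigma\sigma^*u_j$ is $G$-invariant in $Q(X_L)=k(X)\otimes_kL$, hence lies in $k(X)^\times$. Pullback of principal divisors commutes with $g_L^*$, and $g_L^*$ is injective on divisors. Therefore $H_0-D=\frac1{|G|}\sum_jc_j\divisor_X(\Nm u_j)$, which is $\K$-principal. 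Hence $H_0$ is $\K$-Cartier and $H_0\sim_{\K}D$. The delicate checks are injectivity of $g_L^*$ and the invariance identity $Q(X_L)^G=k(X)$. The latter follows from Galois descent for $L\otimes_k k(X)$.
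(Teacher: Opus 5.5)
Your proposal is correct and follows essentially the same route as the paper. You descend the relation to a finite subextension, pass to a Galois closure, average the conjugates $\sigma^*H_L$, and use convexity of log discrepancies with Lemma \ref{lem:separable}; the linear equivalence then descends via the norm $\prod_\sigma\sigma^*u_j\in Q(X_L)^{G}=k(X)$, checked after pullback using injectivity of $g^*$ rather than through the paper's $H_0=\frac1d g_*H$ and $g_*\divisor_{X_L}(u)=\divisor_X(\Nm u)$. One ordering point: you should show that $H_0-D$ is $\K$-principal before applying Lemma \ref{lem:separable} to $(X,\Delta_i+H_0)$, since that lemma needs $K_X+\Delta_i+H_0$ to be $\R$-Cartier.
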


\begin{proof}
Write $H_K-D_K=\sum_j c_j\divisor_{X_K}(u_j)$, where $c_j\in\K$ and $u_j\in Q(X_K)^\times$. Since $Q(X_K)=\varinjlim_{L_0}Q(X_{L_0})$, where $L_0/k$ runs through the finite subextensions of $K/k$, the functions $u_j$ are defined over one finite subextension $L_0/k$ \cite[Tag 01ZM]{Stacks}. Choose $u_{j,L_0}\in Q(X_{L_0})^\times$ with pullback $u_j$, and put $H_{L_0}\coloneqq D_{L_0}+\sum_jc_j\divisor_{X_{L_0}}(u_{j,L_0})$. The pullback of $H_{L_0}$ to $X_K$ is $H_K$. For a prime divisor $P$ on $X_{L_0}$ and a prime divisor $Q$ on $X_K$ mapping onto $P$, separability gives $\mult_P H_{L_0}=\mult_Q H_K\geq0$. Thus, $H_{L_0}\geq0$. By Lemma \ref{lem:separable}, every $(X_{L_0},(\Delta_i)_{L_0}+H_{L_0})$ is sub-lc. For every index $i$ with $(X_K,(\Delta_i)_K+H_K)$ sub-klt, the subpair $(X_{L_0},(\Delta_i)_{L_0}+H_{L_0})$ is sub-klt.

Choose a finite Galois extension $L/k$ containing $L_0$, and let $H$ be the pullback of $H_{L_0}$ to $X_L$. Write $g\colon X_L\to X$, $d\coloneqq[L\colon k]$, and $\mathcal G\coloneqq\Gal(L/k)$. The morphism $g$ is a finite \'etale $\mathcal G$-torsor. For a prime divisor $P$ on $X$, the group $\mathcal G$ acts transitively on the prime divisors $Q\subset X_L$ with $g(Q)=P$. For every prime divisor $Q\subset X_L$ with $g(Q)=P$, we have $|\Stab_{\mathcal G}(Q)|=[\kappa(Q)\colon\kappa(P)]$ and $g_*Q=[\kappa(Q)\colon\kappa(P)]P$. By comparing the coefficients of prime divisors, we obtain
\begin{equation}\label{eq:torsor-divisors}
g_*g^*D=dD,
\qquad
g^*g_*H=\sum_{\sigma\in\mathcal G}\sigma^*H.
\end{equation}

Write $X_L=\coprod_\alpha X_\alpha$ and $K_\alpha\coloneqq k(X_\alpha)$. Then $Q(X_L)=k(X)\otimes_kL=\prod_\alpha K_\alpha$. For $u=(u_\alpha)_\alpha\in Q(X_L)^\times$, define $\Nm(u)\coloneqq\prod_\alpha N_{K_\alpha/k(X)}(u_\alpha)$. The field norm $N_{K_\alpha/k(X)}(u_\alpha)$ is the determinant of multiplication by $u_\alpha$ on $K_\alpha$ as a vector space over $k(X)$. By \cite[Tag 02RT]{Stacks}, for every prime divisor $P$ on $X$,
\[
\ord_P\Nm(u) =\sum_{g(Q)=P}[\kappa(Q)\colon\kappa(P)]\ord_Q(u).
\]
The coefficient of $P$ in $g_*\divisor_{X_L}(u)$ equals $\sum_{g(Q)=P}[\kappa(Q)\colon\kappa(P)]\ord_Q(u)$, and hence, $g_*\divisor_{X_L}(u)=\divisor_X(\Nm(u))$. The equality $H-g^*D=\sum_jc_j\divisor_{X_L}(u_j)$ then gives $g_*H-dD=\sum_jc_j\divisor_X(\Nm(u_j))$. Let $H_0\coloneqq\frac{1}{d}g_*H$. Then $H_0\geq0$ and
\begin{equation}\label{eq:norm-equivalence}
H_0-D=\sum_j\frac{c_j}{d}\divisor_X\!\left(\Nm(u_j)\right).
\end{equation}
Equation \eqref{eq:norm-equivalence} shows that $H_0$ is $\K$-Cartier and $H_0\sim_{\K}D$.

For every $\sigma\in\mathcal G$ and $1\leq i\leq s$, the subpair $(X_L,g^*\Delta_i+\sigma^*H)$ is sub-lc. By \eqref{eq:torsor-divisors}, we obtain $g^*H_0=\frac{1}{d}\sum_{\sigma\in\mathcal G}\sigma^*H$. Hence, for every prime divisor $F$ over a component of $X_L$,
\begin{equation}\label{eq:average-discrepancy}
A_{X_L,g^*\Delta_i+g^*H_0}(F)
=\frac{1}{d}\sum_{\sigma\in\mathcal G}
A_{X_L,g^*\Delta_i+\sigma^*H}(F)\geq0.
\end{equation}
By Lemma \ref{lem:separable}, $(X,\Delta_i+H_0)$ is sub-lc for every $i$. If $(X_K,(\Delta_i)_K+H_K)$ is sub-klt, then every summand in \eqref{eq:average-discrepancy} is positive and $(X,\Delta_i+H_0)$ is sub-klt by Lemma \ref{lem:separable}.
\end{proof}

\subsection{Purely inseparable extensions}\label{subsec:inseparable}

For a purely inseparable extension, Tanaka's formula for canonical divisors on the normalization of the base change \cite[Proposition 2.10]{Tan18} gives an explicit relation between $A_{X,C}(E)$ and $A_{X_K,g^*C}(F)$. Since the additional terms in this relation are nonnegative, sub-lc and sub-klt singularities descend from $X_K$ to $X$. Taking $p$-powers of the finitely many rational functions defining a linear equivalence then descends an effective representative. For the projection $g\colon X_K\to X$ between normal schemes, $g^*C$ denotes the flat pullback of a Weil divisor $C$. The converse to Lemma \ref{lem:inseparable}(1) fails in general, see Example \ref{ex:inseparable-base-change}.

\begin{lemma}\label{lem:inseparable}
Let $K/k$ be a purely inseparable algebraic extension of fields of characteristic $p>0$. Let $X$ be a normal variety such that $X_K$ is normal, and let $g\colon X_K\to X$ be the projection.
\begin{enumerate}[(1)]
\item Let $(X,C)$ be a subpair.
If $(X_K,g^*C)$ is sub-lc (resp.\ sub-klt), then $(X,C)$ is sub-lc (resp.\ sub-klt).
\item Let $D$ be a $\K$-Cartier $\K$-divisor and let $(X,C_i)$ be
subpairs for $1\leq i\leq s$. If $H_K\geq0$, $H_K\sim_{\K}g^*D$, and every $(X_K,g^*C_i+H_K)$ is sub-lc, then there exists an effective $\K$-Cartier $\K$-divisor $H\sim_{\K}D$ such that $g^*H=H_K$ and every $(X,C_i+H)$ is sub-lc. If $(X_K,g^*C_i+H_K)$ is sub-klt for an index $i$, then $(X,C_i+H)$ is sub-klt.
\end{enumerate}
\end{lemma}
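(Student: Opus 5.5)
For (1), I will relate discrepancies on $X$ and on $X_K$ along a common birational model. Let $E$ be a prime divisor on a normal birational model $h\colon W\to X$. I reduce to $K/k$ finite: the data $W$, $E$ and the crepant pullback descend to a finite subextension, by the same limit argument as in Lemma \ref{lem:separable}. Let $\nu\colon W'\to (W_K)_{\mathrm{red}}$ be the normalization and $g_W\colon W'\to W$ the induced finite universal homeomorphism. Over $X$, $W'$ is a birational model of $X_K$, because $X_K$ is normal and $W_K\to X_K$ is birational. Let $F$ be the unique prime divisor of $W'$ over $E$. Tanaka's formula \cite[Proposition 2.10]{Tan18} gives
\[
K_{W'}+(p-1)B=g_W^*K_W
\]
for some effective Weil divisor $B$ on $W'$. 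Applying it on $X$ as well, the assumed normality of $X_K$ gives the corresponding divisor on $X_K$, and I expect that divisor to vanish. Writing $K_W+\Delta_W=h^*(K_X+C)$ and pulling back by $g_W$ yields
\[
K_{W'}+(p-1)B+g_W^*\Delta_W=h_K^*(K_{X_K}+g^*C).
\]
With $e\coloneqq\mult_F g_W^*E\geq1$ this gives
\[
A_{X_K,g^*C}(F)=1-(p-1)\mult_FB-e\,\mult_E\Delta_W\leq 1-e\,\mult_E\Delta_W .
\]

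If $\mult_E\Delta_W\leq 0$, then $A_{X,C}(E)\geq1$ and there is nothing to prove. Otherwise I use $e\geq1$: from $0\leq A_{X_K,g^*C}(F)$ (resp.\ $<$) I get $e\,\mult_E\Delta_W\leq1$ (resp.\ $<1$). Hence $\mult_E\Delta_W\leq1$ (resp.\ $<1$), which is exactly the sub-lc (resp.\ sub-klt) condition on $E$. The main obstacle is making the comparison of canonical divisors precise: I need Tanaka's correction term to be effective on $W'$ and zero on $X_K$. I would first check this on the regular locus of $W$ away from codimension two, then extend.

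For (2), write $H_K-g^*D=\sum_j c_j\divisor_{X_K}(u_j)$ with $u_j\in k(X_K)^\times$. Only finitely many $u_j$ occur, so there is a finite subextension and some $q=p^r$ with $u_j^{q}\in k(X)$; I set $v_j\coloneqq u_j^q$. Define
\[
H\coloneqq D+\sum_j\frac{c_j}{q}\divisor_X(v_j).
\]
Then $H$ is $\K$-Cartier, $H\sim_\K D$, and
\[
g^*H=g^*D+\sum_j c_j\divisor_{X_K}(u_j)=H_K,
\]
since $g^*\divisor_X(v_j)=q\,\divisor_{X_K}(u_j)$. Effectivity of $H$ follows because $g$ is a homeomorphism and $\mult_{g^*P}$ is a positive multiple of $\mult_P$: for each prime $P$ on $X$, $g^*P=e_P P'$ with $e_P\geq1$, so $\mult_P H\geq0$. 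Finally, $g^*(C_i+H)=g^*C_i+H_K$, so part (1) applied to the subpairs $(X,C_i+H)$ gives the sub-lc and sub-klt conclusions.
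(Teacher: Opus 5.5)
Your proposal is correct and follows essentially the paper's argument. You pass to the normalization of $W_K$ and use Tanaka's effective correction term (the paper's conductor divisor $\mathfrak C$ with $K_V+\mathfrak C=\tau^*K_W$). You use $K_{X_K}=g^*K_X$, which holds because $X_K$ is normal; the paper proves it by base change of the dualizing sheaf over the lci locus $X_{\mathrm{reg}}$ together with a codimension-two argument, which settles the point you left as an expectation. You then conclude from $e\geq1$, and your part (2), via $p$-th powers of the $u_j$, is identical to the paper's.

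Your preliminary reduction to finite $K/k$ is unnecessary: $W_K$ is of finite type over $K$, and the whole computation is local at the generic point of $F$, so it works verbatim for infinite purely inseparable extensions.
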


\begin{proof}
Since $K/k$ is purely inseparable, $X_K\to X$ induces a homeomorphism on the underlying topological spaces. Thus, $X_K$ is irreducible, and its normality implies that $X_K$ is integral. The ring $k(X)\otimes_kK$ is a field and a purely inseparable algebraic extension of $k(X)$.

For (1), let $E$ be a prime divisor on a normal birational model $h\colon W\to X$, and write $K_W+C_W=h^*(K_X+C)$. For every affine open $\Spec R\subseteq W$, the inclusion $R\subseteq k(X)$ gives $R\otimes_kK\subseteq k(X)\otimes_kK$. Hence, $W_K$ is also integral. The normalization $\nu\colon V\to W_K$ is finite since $W_K$ is of finite type over the field $K$. Let $g_W\colon W_K\to W$ be the projection and $h_K\colon W_K\to X_K$ the base change of $h$. Let $\tau\coloneqq g_W\circ\nu$ and $\rho\coloneqq h_K\circ\nu$. The morphism $\rho\colon V\to X_K$ is birational. Since $K/k$ is purely inseparable, the projection $W_K\to W$ induces a homeomorphism on the underlying topological spaces. Hence, there is a unique codimension one point of $W_K$ lying over the generic point of $E$. A prime divisor $F$ on the normalization $V$ lies over this point. For a uniformizer $t_E$ at $E$, let $e_F\coloneqq\ord_F(\tau^*t_E)>0$. Units at $E$ remain units at $F$, and $\ord_F(\tau^*u)=e_F\ord_E(u)$ for every $u\in k(X)^\times$.

Choose a regular affine neighborhood $U$ of the generic point of $E$. By \cite[Tag 0E9J]{Stacks}, a closed immersion $U\hookrightarrow\A^N_k$ is a regular immersion. By \cite[Tag 067P]{Stacks}, the base change $U_K\hookrightarrow\A^N_K$ is also a regular immersion. Thus, $U_K$ is a local complete intersection over $K$ and is Gorenstein by \cite[Tag 0BVA]{Stacks}. By \cite[Theorem 2.9]{Tan18}, $\omega_{U_K/K}\simeq\omega_{U/k}\otimes_kK$.

Let $A$ be the local ring of $U_K$ at the generic point of $(E\cap U)_K$, and let $B$ be the normalization of $A$ in $\operatorname{Frac}A$. The ring $A$ is Gorenstein and $B$ is finite over $A$. Hence, $\omega_B\simeq\Hom_A(B,\omega_A)$. Choose a generator of $\omega_A$. Under the multiplication embedding into $\operatorname{Frac}A$, $\Hom_A(B,A)$ is the conductor ideal $\mathfrak c\coloneqq\{b\in B\mid bB\subseteq A\}$. At $F$, we have $\mathfrak c\OO_{V,F}=\varpi_F^{c_F}\OO_{V,F}$, where $\varpi_F$ is a uniformizer of $\OO_{V,F}$ and $c_F\geq0$. The rational sections of the dualizing sheaves therefore give
\begin{equation}\label{eq:conductor-canonical}
K_V+\mathfrak C=\tau^*K_W,
\qquad \mathfrak C\geq0,
\qquad \mult_F\mathfrak C=c_F
\end{equation}
near the generic point of $F$, see \cite[Proposition 2.10]{Tan18}. The pullback $\tau^*K_W$ is computed on the regular open subset $U$. 

On $X_{\mathrm{reg}}$, base change of the dualizing sheaf gives $K_{X_K}=g^*K_X$. Both $X\setminus X_{\mathrm{reg}}$ and $g^{-1}(X\setminus X_{\mathrm{reg}})$ have codimension at least two. Consequently, $K_{X_K}=g^*K_X$ as Weil divisors on $X_K$. Pulling back $K_W+C_W=h^*(K_X+C)$ to $V$ and using $K_{X_K}=g^*K_X$ and \eqref{eq:conductor-canonical} gives
\[
\rho^*(K_{X_K}+g^*C)
=K_V+\tau^*C_W+\mathfrak C
\]
near $F$. The coefficient of $F$ in $\tau^*C_W+\mathfrak C$ is $e_F\mult_E C_W+c_F$. Hence,
\begin{equation}\label{eq:inseparable-discrepancy}
A_{X_K,g^*C}(F)=e_F A_{X,C}(E)-(e_F-1)-c_F,
\end{equation}
equivalently, $A_{X,C}(E)=\frac{A_{X_K,g^*C}(F)+(e_F-1)+c_F}{e_F}$. Since $e_F\geq1$ and $c_F\geq0$, $A_{X_K,g^*C}(F)\geq0$ implies $A_{X,C}(E)\geq0$, and $A_{X_K,g^*C}(F)>0$ implies $A_{X,C}(E)>0$. The same computation also holds for an infinite algebraic extension $K/k$, since $W_K$ is of finite type over $K$, the normalization $V\to W_K$ is finite, and \eqref{eq:inseparable-discrepancy} is computed at the generic point of $F$.

For (2), write $H_K-g^*D=\sum_j b_j\divisor_{X_K}(u_j)$ with $b_j\in\K$ and $u_j\in k(X_K)^\times$. The extension $k(X_K)/k(X)$ is purely inseparable algebraic. Since the set of functions $u_j$ is finite, there exists a number $q=p^a$ such that $v_j\coloneqq u_j^q\in k(X)^\times$ for every $j$. Define
\begin{equation}\label{eq:inseparable-representative}
H\coloneqq D+\sum_j\frac{b_j}{q}\divisor_X(v_j).
\end{equation}
The divisor $H$ in \eqref{eq:inseparable-representative} is $\K$-Cartier and satisfies $H\sim_{\K}D$. Since $g^*\divisor_X(v_j)=q\divisor_{X_K}(u_j)$ for every $j$, $g^*H=H_K$. For every prime divisor $P\subset X$, let $Q\subset X_K$ be the prime divisor mapping onto $P$ and let $e_Q\coloneqq\mult_Qg^*P>0$. Then $\mult_QH_K=e_Q\mult_PH$, and $H_K\geq0$ implies $H\geq0$. Part (1) gives $(X,C_i+H)$ sub-lc for every $i$, and sub-klt for each $i$ for which $(X_K,g^*C_i+H_K)$ is sub-klt.
\end{proof}

\begin{example}\label{ex:inseparable-base-change}
Lc singularities need not be preserved under purely inseparable base change. Let $a\in k\setminus k^p$, let $P\subset\A^1_k$ be the closed point defined by $x^p-a$, and put $K=k(\sqrt[p]{a})$. The pair $(\A^1_k,P)$ is lc, whereas $P_K=p[\sqrt[p]{a}]$ and $(\A^1_K,P_K)$ is not lc.
\end{example}

We will repeatedly use the following birational observation.
Let $\pi\colon Y\to X$ be a proper birational morphism of normal varieties. If $D$ is $\K$-Cartier and $H=\pi^*D+\sum_jc_j\divisor_Y(u_j)$, then
\begin{equation}\label{eq:birational-pullback}
\pi_*H=D+\sum_jc_j\divisor_X(u_j),
\qquad \pi^*(\pi_*H)=H.
\end{equation}
In particular, $\pi_*H$ is $\K$-Cartier and $\pi_*H\sim_{\K}D$. If $H\geq0$, then $\pi_*H\geq0$.

\section{Main results and proofs}\label{sec:results}

In this section, we prove the main results.
\subsection{Semiample perturbations}\label{subsec:perturbations}

The proof first applies Tanaka's semiample perturbation theorem over an $F$-finite field containing an infinite perfect subfield \cite[Theorem 1]{Tan17}. We then descend an effective divisor in the given linear equivalence class along separable extensions by averaging its conjugates and along purely inseparable extensions by taking suitable $p$-powers of the rational functions defining the linear equivalence. Perturbing a sufficiently divisible multiple of $\pi^*D$ on a log resolution and dividing by that multiple gives the inequalities in Theorem \ref{thm:main}.

\begin{proposition}\label{prop:regular}
Let $Y$ be a regular projective variety over a field $k$, and let $S_Y$ be a reduced SNC divisor. For every semiample $\K$-Cartier $\K$-divisor $P_Y$, there exists an effective divisor $H\sim_{\K}P_Y$ such that $(Y,S_Y+H)$ is lc.
\end{proposition}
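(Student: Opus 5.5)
The plan is to reduce to Tanaka's theorem \cite[Theorem 1]{Tan17} over a suitable extension field and then descend the resulting divisor with Proposition \ref{prop:galois} and Lemma \ref{lem:inseparable}. The case $\cha k=0$ is classical. There, Bertini applies after passing to $\overline{k}$, and the separable descent of Proposition \ref{prop:galois} brings the divisor back to $k$. So assume $\cha k=p>0$. Tanaka's theorem needs an $F$-finite base field containing an infinite perfect subfield, while $k$ may be neither. The two defects will be handled by two separate base changes.

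\textbf{Step 1 (reduction to an $F$-finite field).} The data $Y$, $S_Y$, $P_Y$ are of finite type. For $\K=\R$, I also fix a decomposition $P_Y\sim_{\R}\sum b_jL_j$ with each $L_j$ a semiample Cartier divisor, together with the generating sections. All of this data is defined over a finitely generated subfield $k_0\subseteq k$, and $k_0$ is $F$-finite. Passing from $k_0$ to $k$ is not the right direction for descent, so I would instead use Murayama's $\Gamma$-construction \cite[Corollary 3.7(i)]{Mur21}. It should produce a field extension $k\subseteq k^\Gamma$ that is $F$-finite and purely inseparable over $k$. The base change $Y_{k^\Gamma}$ stays regular, or at least normal with $S$ still SNC, for a suitable cofinal choice of $\Gamma$. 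I then apply the remaining steps over $k^\Gamma$ and descend to $k$ by Lemma \ref{lem:inseparable}(2). That lemma applies with $C_1=S_Y$, and it only requires the conclusion upstairs to be lc and $Y_{k^\Gamma}$ to be normal. The precise regularity statement needed from \cite{Mur21} is the point I would check most carefully.

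\textbf{Step 2 (adding an infinite perfect subfield).} Now $k$ is $F$-finite. Pass to $k_\infty=k\overline{\F}_p$. By Lemma \ref{lem:constant-field}, $k_\infty$ is $F$-finite and separable algebraic over $k$, and it contains $\overline{\F}_p$. By Lemma \ref{lem:separable}, every component of $Y_{k_\infty}$ is regular projective, $(S_Y)_{k_\infty}$ is reduced SNC, and $(P_Y)_{k_\infty}$ remains semiample. Apply \cite[Theorem 1]{Tan17} to the identity resolution of $(Y_{k_\infty},(S_Y)_{k_\infty})$ on each component. This gives $H_{k_\infty}\geq0$ with $H_{k_\infty}\sim_{\K}(P_Y)_{k_\infty}$ and $(Y_{k_\infty},(S_Y)_{k_\infty}+H_{k_\infty})$ lc. Proposition \ref{prop:galois} with $s=1$ and $\Delta_1=S_Y$ then yields $H\sim_{\K}P_Y$ over $k$ with $(Y,S_Y+H)$ lc.

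\textbf{Expected obstacle.} The hardest step is Step 1: obtaining an $F$-finite purely inseparable extension over which regularity, the SNC condition and semiampleness all persist, so that Tanaka's hypotheses hold. Semiampleness is preserved by flat base change. Regularity can fail under inseparable base change, and Example \ref{ex:inseparable-base-change} shows that lc can fail too. Lemma \ref{lem:inseparable}(2) needs only normality of the base change for the descent itself. However, Tanaka's theorem needs the log resolution hypothesis upstairs. My first approach is to use that $Y$ and the strata of $S_Y$ are geometrically regular over $k^\Gamma$ for $\Gamma$ cofinite, as asserted in \cite{Mur21}. If that is not available directly, I would resolve again over $k^\Gamma$ using the resolution hypothesis, and then push the perturbation down via \eqref{eq:birational-pullback}.
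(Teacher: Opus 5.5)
Your proposal is correct and is essentially the paper's proof. In characteristic $p$, the paper uses \cite[Corollary 3.7(i)]{Mur21} to pick a single cofinite $\Gamma$ such that $k^\Gamma$ is $F$-finite and $Y_{k^\Gamma}$ and every stratum $(S_I)_{k^\Gamma}$ are regular, so $(S_Y)_{k^\Gamma}$ is reduced SNC and your re-resolution fallback is never needed. It then treats the $F$-finite case via $k_\infty$, \cite[Theorem 1]{Tan17} and Proposition \ref{prop:galois}, and descends by Lemma \ref{lem:inseparable}(2). The only difference is in characteristic zero, where the paper applies Bertini directly over the infinite field $k$ rather than going through $\overline{k}$ and Proposition \ref{prop:galois}; both routes work.
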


\begin{proof}
Suppose first that $\cha k=0$. Write $P_Y\sim_{\K}\sum_jb_jL_j$, where $b_j\in\K_{>0}$ and the $L_j$ are semiample Cartier divisors. Choose integers $q_j>0$ such that $|q_jL_j|$ is base point free and $\frac{b_j}{q_j}\leq1$. Bertini's theorem over the infinite field $k$ gives divisors $T_j\in|q_jL_j|$ such that $S_Y+\sum_jT_j$ is reduced SNC. Let $H\coloneqq\sum_j\frac{b_j}{q_j}T_j$. Then $H\geq0$ and $H\sim_{\K}P_Y$. Since every coefficient of $S_Y+H$ is at most one, $(Y,S_Y+H)$ is lc.

Now, suppose that $\cha k=p>0$ and that $k$ is $F$-finite. By Lemmas \ref{lem:separable} and \ref{lem:constant-field}, the field $k_\infty$ is $F$-finite and $Y_{k_\infty}=\coprod_\alpha Y_\alpha$ is a disjoint union of regular projective varieties. Put $S_\alpha\coloneqq(S_Y)_{k_\infty}|_{Y_\alpha}$. Each $S_\alpha$ is reduced SNC. By \cite[Theorem 1]{Tan17}, there is an effective divisor $H_\alpha\sim_{\K}(P_Y)_{k_\infty}|_{Y_\alpha}$ such that $(Y_\alpha,S_\alpha+H_\alpha)$ is lc. Define $H_{k_\infty}$ by $H_{k_\infty}|_{Y_\alpha}=H_\alpha$. Then $H_{k_\infty}\geq0$, $H_{k_\infty}\sim_{\K}(P_Y)_{k_\infty}$, and $(Y_{k_\infty},(S_Y)_{k_\infty}+H_{k_\infty})$ is lc. By Proposition \ref{prop:galois}, there is an effective divisor $H\sim_{\K}P_Y$ on $Y$ such that $(Y,S_Y+H)$ is lc.

From now on, let $k$ be an arbitrary field of characteristic $p>0$. Choose a $p$-basis $\Lambda$ of $k$ and write $S_Y=\sum_{i=1}^rS_i$. For every nonempty subset $I\subseteq\{1,\ldots,r\}$, let $S_I\coloneqq\bigcap_{i\in I}S_i$ be the scheme theoretic intersection. By \cite[Construction 3.1, Lemma 3.2(i), and Corollary 3.7(i)]{Mur21}, there is a cofinite subset $\Gamma\subseteq\Lambda$ such that
\[
K\coloneqq k^\Gamma
=\bigcup_{a\geq0}k\bigl(\sqrt[p^a]{\lambda}\mid\lambda\in\Gamma\bigr)
\]
is $F$-finite and $Y_K$ and every $(S_I)_K$ are regular. The cofinite subset $\Gamma$ is chosen simultaneously for $Y$, all $S_i$, and all nonempty intersections $S_I$. Each $(S_i)_K$ is reduced, and each nonempty $(S_I)_K$ has codimension $|I|$ in $Y_K$. Hence, $(S_Y)_K$ is reduced SNC. The $F$-finite case gives $H_K\geq0$ with $H_K\sim_{\K}(P_Y)_K$ and $(Y_K,(S_Y)_K+H_K)$ lc. Since $K/k$ is purely inseparable algebraic, Lemma \ref{lem:inseparable}(2) gives an effective divisor $H\sim_{\K}P_Y$ on $Y$ such that $(Y,S_Y+H)$ is lc.
\end{proof}

\begin{remark}\label{rem:order-bound}
Let $Y$ be a regular projective variety, let $S_Y$ be a reduced SNC
divisor, and let $P_Y$ be a semiample $\K$-Cartier $\K$-divisor.
Let $(Y,C_Y)$ be a subpair with $C_Y\leq S_Y$.
Fix $0<\epsilon<1$ and choose an integer
$n\geq\frac{1}{\epsilon}$.
By Proposition \ref{prop:regular}, there is an effective divisor
$H\sim_{\K}nP_Y$ such that $(Y,S_Y+H)$ is lc.
Put $H_\epsilon\coloneqq\frac{1}{n}H$.
Then
\begin{equation}\label{eq:order-bound}
0\leq\ord_E H_\epsilon
\leq\frac{1}{n}A_{Y,S_Y}(E)
\leq\epsilon A_{Y,C_Y}(E)
\end{equation}
for every prime divisor $E$ over $Y$.
In particular, $H_\epsilon$ may be chosen independently of $C_Y$.
\end{remark}

\begin{proof}[Proof of Theorem \ref{thm:main}]
Choose a projective log resolution $\pi\colon Y\to X$ and write $K_Y+\Delta_Y=\pi^*(K_X+\Delta)$. Let $S_Y$ be the reduced SNC divisor consisting of all $\pi$-exceptional prime divisors and the strict transforms of the prime divisors in $\Supp\Delta$. Since $(X,\Delta)$ is sub-lc, $\Delta_Y\leq S_Y$. Choose an effective divisor $H_\epsilon\sim_{\K}\pi^*D$ satisfying \eqref{eq:order-bound} with $C_Y=\Delta_Y$. Then $\ord_E H_\epsilon\leq\epsilon A_{X,\Delta}(E)$ for every $E$. Write $H_\epsilon=\pi^*D+\sum_jc_j\divisor_Y(u_j)$ and put $D_\epsilon\coloneqq\pi_*H_\epsilon$. Pushing forward the principal divisors gives $D_\epsilon=D+\sum_jc_j\divisor_X(u_j)$, and pulling back gives $\pi^*D_\epsilon=H_\epsilon$. Thus, $D_\epsilon$ is effective and $D_\epsilon\sim_{\K}D$. The equality $K_Y+\Delta_Y+H_\epsilon=\pi^*(K_X+\Delta+D_\epsilon)$ and \eqref{eq:discrepancy-order} give $A_{X,\Delta+D_\epsilon}(E)=A_{X,\Delta}(E)-\ord_E H_\epsilon$. Together with $0\leq\ord_E H_\epsilon\leq\epsilon A_{X,\Delta}(E)$, we obtain \eqref{eq:main-bound}.
\end{proof}

For every prime divisor $E$ over $X$, \eqref{eq:main-bound} gives $A_{X,\Delta}(E)=0$ if and only if $A_{X,\Delta+D_\epsilon}(E)=0$. Therefore, $\Nklt(X,\Delta+D_\epsilon)=\Nklt(X,\Delta)$. The locus $\Nklt(X,\Delta)$ is closed since $(X,\Delta)$ admits a projective log resolution.

\begin{example}\label{ex:integral-representative}
An effective integral divisor $D'\in|D|$ need not preserve log canonicity, even when Theorem \ref{thm:main} gives an effective $\Q$-divisor in the class of $D$. Let $X=\P^1_{\F_2}$, $\Delta=[0]+[1]+[\infty]$, and $D=[\infty]$. Every effective integral divisor $D'\in|D|$ makes $(X,\Delta+D')$ non-lc. If $P$ is the closed point defined by $t^2+t+1$, then $\divisor_X(t^2+t+1)=P-2[\infty]$, and $D'\coloneqq\frac{1}{2}P\sim_{\Q}D$ preserves the lc places. 
\end{example}

\subsection{Fixed parts and effective boundaries}\label{subsec:fixed}

For a decomposition $\pi^*D=P_Y+N_Y$ with $P_Y$ semiample and $N_Y\geq0$, the perturbation of $P_Y$ can be chosen with $N_Y$ fixed. Writing $K_Y+\Delta_Y=\pi^*(K_X+\Delta)$ gives $A_{Y,\Delta_Y+N_Y}(E)=A_{X,\Delta}(E)-\ord_E N_Y$.

\begin{proposition}\label{prop:decomposition}
Let $(X,\Delta)$ be a projective subpair over a field $k$, and let $D$ be a $\K$-Cartier $\K$-divisor. Let $\pi\colon Y\to X$ be a projective birational morphism of normal varieties, and write $K_Y+\Delta_Y=\pi^*(K_X+\Delta)$ and $\pi^*D=P_Y+N_Y$, where $P_Y$ and $N_Y$ are $\K$-Cartier $\K$-divisors, $P_Y$ is semiample, and $N_Y\geq0$. Assume that $(Y,\Delta_Y+N_Y)$ is sub-lc and admits a projective log resolution. For every $0<\epsilon<1$, there exists an effective divisor $G_\epsilon\sim_{\K}D$ such that
\begin{equation}\label{eq:fixed-bound}
(1-\epsilon)A_{Y,\Delta_Y+N_Y}(E)
\leq A_{X,\Delta+G_\epsilon}(E)
\leq A_{Y,\Delta_Y+N_Y}(E)
\end{equation}
for every prime divisor $E$ over $X$.
\end{proposition}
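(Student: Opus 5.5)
The plan is to reduce to the proof of Theorem \ref{thm:main}, applied on $Y$ rather than $X$, with $N_Y$ absorbed into the boundary. Put $\Delta'_Y\coloneqq\Delta_Y+N_Y$. Then $(Y,\Delta'_Y)$ is a projective sub-lc subpair admitting a projective log resolution $\mu\colon Z\to Y$, and $P_Y$ is a semiample $\K$-Cartier $\K$-divisor on $Y$. On $Z$ I write $K_Z+\Delta'_Z=\mu^*(K_Y+\Delta'_Y)$. Let $S_Z$ be the reduced SNC divisor consisting of the $\mu$-exceptional divisors and the strict transforms of the components of $\Supp\Delta'_Y$. Sub-lc gives $\Delta'_Z\leq S_Z$. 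Remark \ref{rem:order-bound} applied to $\mu^*P_Y$ then yields an effective $H_\epsilon\sim_{\K}\mu^*P_Y$ with
\[
0\leq\ord_E H_\epsilon\leq\epsilon A_{Y,\Delta'_Y}(E)
\]
for every prime divisor $E$ over $Z$, hence over $X$, since $Z\to X$ is birational.

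Next I descend to $X$. Write $H_\epsilon=\mu^*P_Y+\sum_jc_j\divisor_Z(u_j)$ and set $P'_Y\coloneqq\mu_*H_\epsilon$. By \eqref{eq:birational-pullback}, $P'_Y\geq0$, $P'_Y\sim_{\K}P_Y$, and $\mu^*P'_Y=H_\epsilon$. Now put
\[
G_\epsilon\coloneqq\pi_*(P'_Y+N_Y)=D+\sum_jc_j\divisor_X(u_j),
\]
using $\pi^*D=P_Y+N_Y$ and $P'_Y+N_Y=\pi^*D+\sum_jc_j\divisor_Y(u_j)$. By \eqref{eq:birational-pullback} again, $G_\epsilon$ is effective, $\K$-Cartier, $G_\epsilon\sim_{\K}D$, and $\pi^*G_\epsilon=P'_Y+N_Y$.

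Finally I compare log discrepancies. We have
\[
K_Y+\Delta_Y+N_Y+P'_Y=\pi^*(K_X+\Delta+G_\epsilon),
\]
so for every $E$ over $X$, using \eqref{eq:discrepancy-order} on $Y$,
\[
A_{X,\Delta+G_\epsilon}(E)=A_{Y,\Delta'_Y+P'_Y}(E)=A_{Y,\Delta'_Y}(E)-\ord_E P'_Y,
\]
where $\ord_E P'_Y=\ord_E H_\epsilon$. Combining this with the order bound gives \eqref{eq:fixed-bound}.

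The one point needing care is that prime divisors $E$ over $X$ are exactly prime divisors over $Y$, so that log discrepancies with respect to $X$ and to the crepant model $Y$ agree. This holds because $\pi$ is birational and the boundaries are defined crepantly. I also need that $\ord_E$ of a $\K$-Cartier divisor on $Y$ is computed on a common model, which is the convention already fixed in the notation section. Otherwise the argument is a direct repetition of the proof of Theorem \ref{thm:main}, so no serious obstacle is expected.
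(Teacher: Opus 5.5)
Your proof is correct and follows the paper's argument. The paper applies Theorem~\ref{thm:main} directly to the projective sub-lc subpair $(Y,\Delta_Y+N_Y)$ and the semiample divisor $P_Y$, which gives $H_\epsilon\sim_{\K}P_Y$ with $0\leq\ord_E H_\epsilon\leq\epsilon A_{Y,\Delta_Y+N_Y}(E)$, and then sets $G_\epsilon\coloneqq\pi_*(N_Y+H_\epsilon)$; you instead write out the proof of that theorem inline (log resolution $Z\to Y$, Remark~\ref{rem:order-bound}, pushforward to $Y$) before the final pushforward to $X$.
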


\begin{proof}
By Theorem \ref{thm:main}, there exists an effective divisor $H_\epsilon\sim_{\K}P_Y$ such that $\ord_E H_\epsilon\leq\epsilon A_{Y,\Delta_Y+N_Y}(E)$ for every $E$. Set $G_\epsilon\coloneqq\pi_*(N_Y+H_\epsilon)$ and write $N_Y+H_\epsilon=\pi^*D+\sum_jc_j\divisor_Y(u_j)$. Then $G_\epsilon=D+\sum_jc_j\divisor_X(u_j)$ and $\pi^*G_\epsilon=N_Y+H_\epsilon$. In particular, $G_\epsilon\geq0$ and $G_\epsilon\sim_{\K}D$. Since $A_{X,\Delta+G_\epsilon}(E) =A_{Y,\Delta_Y+N_Y}(E)-\ord_E H_\epsilon$, \eqref{eq:fixed-bound} follows.
\end{proof}

\subsubsection{Finitely generated section rings}\label{subsubsec:finite-generation}

If $R(X,qD)$ is finitely generated, then $R(X,mqD)$ is generated in degree one for every sufficiently divisible integer $m>0$ \cite[Tag 0EGH]{Stacks}. Theorem \ref{thm:fixed} assumes a projective log resolution of the base ideal for a multiple with this property. On the resolution, the fixed part computes every $v_E(\|D\|)$, and Proposition \ref{prop:decomposition} perturbs the semiample part.

\begin{proof}[Proof of Theorem \ref{thm:fixed}]
Put $\mathfrak b\coloneqq\mathfrak b(|qD|)$ and choose the projective log resolution $\pi\colon Y\to X$ in the assumptions. Write $\mathfrak b\OO_Y=\OO_Y(-F_Y)$ and $q\pi^*D=L_Y+F_Y$. The pullbacks of the sections of $\OO_X(qD)$ generate $\OO_Y(L_Y)$. Since $R(X,qD)$ is generated in degree one, every section of $\OO_X(\ell qD)$ is a linear combination of products of $\ell$ sections of $\OO_X(qD)$. Hence,
\begin{equation}\label{eq:fixed-systems}
\mathfrak b(|\ell qD|)=\mathfrak b^\ell,
\qquad |\ell q\pi^*D|=|\ell L_Y|+\ell F_Y
\quad(\ell\geq1).
\end{equation}
Moreover, every section of $\OO_Y(\ell q\pi^*D)$ vanishes along $\ell F_Y$. Multiplication by the section defining $\ell F_Y$ therefore gives an isomorphism $H^0(Y,\OO_Y(\ell L_Y))\simeq H^0(Y,\OO_Y(\ell q\pi^*D))$. Let $P_Y\coloneqq\frac{1}{q}L_Y$ and $N_Y\coloneqq\frac{1}{q}F_Y$.

Let $G$ be an effective divisor such that $G\sim_{\Q}D$. Choose an integer $n>0$ divisible by $q$ such that $nG\in|nD|$. The equality of linear systems in \eqref{eq:fixed-systems} gives $\pi^*G\geq N_Y$ and hence $\ord_E G\geq\ord_E N_Y$ for every $E$.

Fix a prime divisor $E$ over $X$, and let $\eta_E$ be the generic point of $\Cent_Y E$. Since $\OO_Y(qP_Y)$ is generated by global sections, there exists $s_E\in H^0(Y,\OO_Y(qP_Y))$ that does not vanish at $\eta_E$. The zero divisor $Q_E$ of $s_E$ satisfies $\ord_E Q_E=0$. Set $G_E\coloneqq\pi_*(N_Y+\frac{1}{q}Q_E)$. The linear equivalence $N_Y+\frac{1}{q}Q_E\sim_{\Q}\pi^*D$ and \eqref{eq:birational-pullback} give $G_E\sim_{\Q}D$ and $\pi^*G_E=N_Y+\frac{1}{q}Q_E$. Thus, $G_E\geq0$ and $\ord_E G_E=\ord_E N_Y$. Consequently,
\begin{equation}\label{eq:fixed-order}
v_E(\|D\|)=\ord_E N_Y=\frac{1}{q}\ord_E F_Y
\qquad\text{for every prime divisor }E\text{ over }X.
\end{equation}

If $(X,\Delta+G)$ is sub-lc, then $A_{X,\Delta}(E)\geq\ord_E G\geq v_E(\|D\|)$ for every $E$. If $(X,\Delta+G)$ is sub-klt, then $A_{X,\Delta}(E)>\ord_E G\geq v_E(\|D\|)$ for every $E$.

For the converse, the inequalities in \eqref{eq:criterion} and the identity \eqref{eq:fixed-order} imply that $(Y,\Delta_Y+N_Y)$ is sub-lc. Proposition \ref{prop:decomposition} gives an effective divisor $G_\epsilon\sim_{\Q}D$ with $(X,\Delta+G_\epsilon)$ sub-lc. If all inequalities in \eqref{eq:criterion} are strict, then $A_{Y,\Delta_Y+N_Y}(E)>0$ for every $E$, and \eqref{eq:fixed-bound} gives $A_{X,\Delta+G_\epsilon}(E)\geq (1-\epsilon)A_{Y,\Delta_Y+N_Y}(E)>0$. Therefore, $(X,\Delta+G_\epsilon)$ is sub-klt.
\end{proof}

\begin{example}\label{ex:finite-generation}
The lc criterion in Theorem \ref{thm:fixed} can fail without finite generation assumption. Consider Zariski's example of a nef and big divisor that is not semiample \cite[Example 4.3]{Koi15}. Let $C_0\subset\P^2_{\C}$ be a smooth cubic, and choose twelve distinct points $p_1,\ldots,p_{12}\in C_0$ such that $\OO_{C_0}(4)(-\sum_i p_i)$ is not torsion in $\Pic^0(C_0)$. Let $X\to\P^2$ be the blow-up of $p_1,\ldots,p_{12}$, let $H$ be the pullback of a line, and let $E_i$ be the exceptional divisor over $p_i$. Write $C$ for the strict transform of $C_0$ and put $D\coloneqq4H-\sum_iE_i=H+C$. Then $D^2=4$, $C^2=-3$, $D\cdot C=0$, and $D\cdot E_i=1$. For an irreducible plane curve $T_0\ne C_0$ of degree $d$, put $m_i\coloneqq\mult_{p_i}T_0$ and let $T$ be the strict transform of $T_0$. B\'ezout's theorem gives $\sum_i m_i\leq3d$ and $D\cdot T=4d-\sum_i m_i\geq d$. For every rational number $0<\delta<1$,
\[
(D-\delta C)\cdot C=3\delta,\qquad (D-\delta C)\cdot E_i=1-\delta,\qquad (D-\delta C)^2=4-3\delta^2>0.
\]
Moreover, $(D-\delta C)\cdot T=(4-3\delta)d-(1-\delta)\sum_i m_i\geq d$. By the Nakai--Moishezon criterion, $D-\delta C$ is ample. In particular, $D$ is nef and big.

Since $\OO_C(D)$ is a nontorsion line bundle of degree zero, $H^0(C,\OO_C(mD))=0$ for every integer $m>0$. Every divisor in $|mD|$ therefore contains $C$. Consequently, every effective divisor $G\sim_{\Q}D$ satisfies $\ord_C G>0$, and $(X,C+G)$ is not lc.

Fix a prime divisor $E$ over $X$ and a rational number $0<\delta<1$. Choose an integer $m>0$ such that $m(D-\delta C)$ is Cartier and $\OO_X(m(D-\delta C))$ is generated by global sections. A section nonvanishing at the generic point of $\Cent_X E$ gives an effective divisor $Q_\delta\sim_{\Q}D-\delta C$ with $\ord_E Q_\delta=0$. Then $Q_\delta+\delta C\sim_{\Q}D$ is effective and $\ord_E(Q_\delta+\delta C)=\delta\ord_E C$. Letting $\delta$ tend to zero gives $v_E(\|D\|)=0$ for every $E$. Thus, the lc pair $(X,C)$ satisfies \eqref{eq:criterion}, although no effective divisor $G\sim_{\Q}D$ makes $(X,C+G)$ lc.
\end{example}

\subsubsection{Anticanonical boundaries}\label{subsubsec:anticanonical}

Proposition \ref{prop:decomposition} constructs anticanonical boundaries over the ground field. Let $(X,\Delta)$ be a projective lc pair over $k$, with $\Delta$ a $\Q$-divisor. Let $\pi\colon Y\to X$ be a projective birational morphism of normal varieties. Assume that $-\pi^*(K_X+\Delta)=P_Y+N_Y$ satisfies the assumptions of Proposition \ref{prop:decomposition} with $\K=\Q$. For the divisor $G_\epsilon$ given by that proposition, put $\Gamma_\epsilon\coloneqq\Delta+G_\epsilon$. Then $\Gamma_\epsilon\geq\Delta$, $(X,\Gamma_\epsilon)$ is lc, and $K_X+\Gamma_\epsilon\sim_{\Q}0$. The lc places of $(X,\Gamma_\epsilon)$ are exactly the prime divisors $E$ satisfying $A_{Y,\Delta_Y+N_Y}(E)=0$.

Over $\C$, in the big anticanonical case, Choi--Park constructed effective anticanonical representatives realizing the potentially non-klt locus \cite[Proposition 1.4(1)]{CP16} and characterized klt Calabi--Yau type through the $s$-decomposition \cite[Theorem 5.10]{CP16}. For a decomposition with semiample positive part, the boundary construction over $\C$ also follows from Bertini's theorem. The association of a generalized pair to a potential triple with a birational Zariski decomposition appears in \cite[proof of Theorem 1.1]{CJL25}.

\begin{remark}\label{rem:potential-discrepancy}
Over $\C$, the \emph{potential log discrepancy} is defined as $\overline A_{X,\Delta}(E)\coloneqq A_{X,\Delta}(E)-\sigma_E(-(K_X+\Delta))$ \cite[Definition 3.1]{CP16}. If $-\pi^*(K_X+\Delta)=P_Y+N_Y$ is the divisorial Zariski decomposition with $P_Y$ nef, then
\[
\sigma_E(-(K_X+\Delta))=\ord_E N_Y, \qquad \overline A_{X,\Delta}(E)=A_{Y,\Delta_Y+N_Y}(E).
\]
For a decomposition with an arbitrary effective summand $N_Y$, Proposition \ref{prop:decomposition} compares the log discrepancies with $A_{Y,\Delta_Y+N_Y}(E)$, rather than with the potential log discrepancies.
\end{remark}

Let $(X,\Delta)$ be a projective lc pair with $\Delta$ a $\Q$-divisor, and let $\phi\colon X\dashrightarrow X'$ be a birational contraction. Put $\Delta'\coloneqq\phi_*\Delta$. Assume that $(X',\Delta')$ is lc and $-(K_{X'}+\Delta')$ is semiample. Let $p\colon W\to X$ and $q\colon W\to X'$ be a projective common log resolution satisfying
\[
-p^*(K_X+\Delta)=-q^*(K_{X'}+\Delta')+F_W, \qquad K_W+\Delta_W=p^*(K_X+\Delta),
\]
where $F_W\geq0$ is $q$-exceptional and $\Supp\Delta_W\cup\Supp F_W$ is SNC. Then $K_W+\Delta_W+F_W=q^*(K_{X'}+\Delta')$ and $A_{W,\Delta_W+F_W}(E)=A_{X',\Delta'}(E)$. Proposition \ref{prop:decomposition}, with $P_W=-q^*(K_{X'}+\Delta')$ and $N_W=F_W$, gives an lc boundary $\Gamma_\epsilon\geq\Delta$ satisfying $K_X+\Gamma_\epsilon\sim_{\Q}0$. The lc places of $(X,\Gamma_\epsilon)$ are exactly the lc places of $(X',\Delta')$. For $\Delta=0$, the decomposition of $-p^*K_X$ therefore gives an lc boundary $\Gamma_\epsilon$ with $K_X+\Gamma_\epsilon\sim_{\Q}0$.

\subsection{Generalized pairs and minimal models}\label{subsec:generalized}

For generalized klt pairs over $\C$, \cite[Lemma 3.4]{LX23} replaces a big nef part by an ordinary klt boundary. In positive characteristic and dimension at most three, \cite[Lemma 2.4]{BF25} constructs a klt boundary for rational generalized klt data after adding an ample $\Q$-divisor. The constructions below hold over arbitrary fields under the specified resolution assumptions.

A generalized pair $(X,B+\bM)$ consists of a normal variety $X$, an effective $\R$-divisor $B$, a projective birational morphism $\pi\colon Y\to X$ of normal varieties, and a nef $\R$-Cartier divisor $M_Y$ such that $K_X+B+M_X$ is $\R$-Cartier, where $M_X=\pi_*M_Y$. The divisor $M_Y$ determines the b-divisor $\bM$, with $M_W=\mu^*M_Y$ on every model $\mu\colon W\to Y$. The b-divisor $\bM$ is said to \emph{descend to $Y$}. Define $B_Y$ by
\begin{equation}\label{eq:generalized-crepant}
K_Y+B_Y+M_Y=\pi^*(K_X+B+M_X).
\end{equation}
For a normal model $\mu\colon W\to Y$, pulling back \eqref{eq:generalized-crepant} via $\mu$, write $K_W+B_W=\mu^*(K_Y+B_Y)$. For a prime divisor $E$ on $W$, the \emph{generalized log discrepancy} is $a_E(X,B,\bM)\coloneqq1-\mult_E B_W=A_{Y,B_Y}(E)$. The generalized pair is \emph{generalized lc} (resp. \emph{generalized klt}) if $a_E(X,B,\bM)\geq$ (resp. $a_E(X,B,\bM)>0$) for every prime divisor $E$ over $X$. A \emph{nef $\K$-Cartier b-divisor} descends to a nef $\K$-Cartier $\K$-divisor on a projective birational model.

The b-divisor $\bM$ is \emph{b-semiample} (resp. \emph{b-big}) if $\bM$ descends to a semiample (resp. big) divisor. When $B$ is a $\Q$-divisor and $\bM$ is a $\Q$-Cartier b-divisor, $K_X+B+M_X$ is a $\Q$-Cartier divisor.

A nef b-divisor is \emph{NQC} if $\bM=\sum_{j=1}^m r_j\mathbf M_j$ for finitely many nef Cartier b-divisors $\mathbf M_j$ and real numbers $r_j\geq0$ \cite[Definition 2.3]{YYZ26}. A nef $\Q$-Cartier b-divisor is NQC. 

A \emph{minimal model} is a birational contraction $\phi\colon X\dashrightarrow X'$ to a normal projective variety $X'$ such that $X'$ is $\Q$-factorial, $B'=\phi_*B$, $(X',B'+\bM)$ is generalized klt, $K_{X'}+B'+M_{X'}$ is nef, and $a_E(X',B',\bM)\geq a_E(X,B,\bM)$ for every prime divisor $E$ over $X$. The discrepancy inequality is strict for every prime divisor on $X$ contracted by $\phi$. A \emph{good minimal model} is a minimal model with $K_{X'}+B'+M_{X'}$ semiample.

Proposition \ref{prop:absorption} constructs effective boundaries from a decomposition $M_Y=P_Y+N_Y$ without assuming that $M_Y$ is nef.

\begin{proposition}\label{prop:absorption}
Let $\pi\colon Y\to X$ be a projective birational morphism of normal projective varieties over a field $k$. Let $B$ be an $\R$-divisor on $X$ and write $M_Y=P_Y+N_Y$, where $P_Y$ and $N_Y$ are $\K$-Cartier $\K$-divisors, $P_Y$ is semiample, and $N_Y\geq0$. Put $M_X\coloneqq\pi_*M_Y$ and assume that $K_X+B+M_X$ is $\R$-Cartier. Define $B_Y$ by $K_Y+B_Y+M_Y=\pi^*(K_X+B+M_X)$. Assume that $(Y,B_Y+N_Y)$ is sub-lc and admits a projective log resolution. For every $0<\epsilon<1$, there exists a sub-lc subpair $(X,\Gamma_\epsilon)$ such that $\Gamma_\epsilon\geq B$, $\Gamma_\epsilon-B$ is a $\K$-divisor, $\Gamma_\epsilon-B\sim_{\K}M_X$, and
\begin{equation}\label{eq:absorption-bound}
(1-\epsilon)A_{Y,B_Y+N_Y}(E) \leq A_{X,\Gamma_\epsilon}(E) \leq A_{Y,B_Y+N_Y}(E)
\end{equation}
for every prime divisor $E$ over $X$. If $B\geq0$, then $(X,\Gamma_\epsilon)$ is an lc pair.
\end{proposition}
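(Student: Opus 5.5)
The plan is to reduce to Proposition \ref{prop:decomposition} by setting up an auxiliary subpair on $X$ in which $M_X$ is absorbed into the boundary. Put $\Delta\coloneqq B+M_X$. Then $K_X+\Delta$ is $\R$-Cartier, so $(X,\Delta)$ is a subpair. Since $M_Y$ is $\K$-Cartier and $\pi$ is birational, $\pi^*M_X$ may not make sense, so I will not apply Proposition \ref{prop:decomposition} to $D=M_X$ directly. Instead I will repeat its proof with the divisor $M_Y$ on $Y$.

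The log resolution will handle the perturbation. Define $\Delta_Y$ by $K_Y+\Delta_Y=\pi^*(K_X+\Delta)$. Comparing with the defining equation for $B_Y$ gives $\Delta_Y=B_Y+M_Y$. Next I apply Theorem \ref{thm:main} to the sub-lc subpair $(Y,B_Y+N_Y)$, which admits a projective log resolution by assumption, and to the semiample divisor $P_Y$. This yields an effective $\K$-Cartier $H_\epsilon\sim_{\K}P_Y$ with
\[
0\leq \ord_E H_\epsilon\leq\epsilon A_{Y,B_Y+N_Y}(E)
\]
for every prime divisor $E$ over $Y$, which is the same as every prime divisor over $X$.

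The boundary is then built from $H_\epsilon$ as follows. Write $N_Y+H_\epsilon=M_Y+\sum_jc_j\divisor_Y(u_j)$ with $c_j\in\K$, and set $G\coloneqq\pi_*(N_Y+H_\epsilon)\geq0$. Pushing forward gives $G=M_X+\sum_jc_j\divisor_X(u_j)$, so $G\sim_{\K}M_X$ and $G$ is a $\K$-divisor. Put $\Gamma_\epsilon\coloneqq B+G$. Then $\Gamma_\epsilon\geq B$ and $\Gamma_\epsilon-B=G\sim_{\K}M_X$. Moreover
\[
K_X+\Gamma_\epsilon=K_X+B+M_X+\textstyle\sum_jc_j\divisor_X(u_j),
\]
which is $\R$-Cartier.

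The step needing the most care is the crepant pullback, because $G$ itself need not be $\R$-Cartier; only $K_X+\Gamma_\epsilon$ is. I therefore pull back the whole sum, using that pullback of principal divisors commutes with $\pi$:
\[
\pi^*(K_X+\Gamma_\epsilon)=K_Y+B_Y+M_Y+\textstyle\sum_jc_j\divisor_Y(u_j)=K_Y+B_Y+N_Y+H_\epsilon .
\]
Hence the crepant boundary of $(X,\Gamma_\epsilon)$ on $Y$ is $B_Y+N_Y+H_\epsilon$. Passing to further models and using \eqref{eq:discrepancy-order} on $Y$ gives
\[
A_{X,\Gamma_\epsilon}(E)=A_{Y,B_Y+N_Y}(E)-\ord_E H_\epsilon .
\]
Combined with the bound on $\ord_E H_\epsilon$, this yields \eqref{eq:absorption-bound}. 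Sub-lc follows from $A_{Y,B_Y+N_Y}\geq0$. If $B\geq0$, then $\Gamma_\epsilon=B+G\geq0$, so $(X,\Gamma_\epsilon)$ is an lc pair.
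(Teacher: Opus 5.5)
Your proposal is correct and follows essentially the same argument as the paper. You apply Theorem \ref{thm:main} to $(Y,B_Y+N_Y)$ and $P_Y$, set $\Gamma_\epsilon=B+\pi_*(N_Y+H_\epsilon)$, and pull back $K_X+\Gamma_\epsilon$ as a whole to get $K_Y+B_Y+N_Y+H_\epsilon$; this is exactly the paper's proof. The opening detour through $\Delta=B+M_X$ is unused but harmless, and you are right that Proposition \ref{prop:decomposition} cannot be applied directly because $M_X$ need not be $\K$-Cartier.
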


\begin{proof}
By Theorem \ref{thm:main}, there exists an effective divisor $H_\epsilon\sim_{\K}P_Y$ such that $\ord_E H_\epsilon\leq\epsilon A_{Y,B_Y+N_Y}(E)$ for every $E$. Write $N_Y+H_\epsilon=M_Y+\sum_jc_j\divisor_Y(u_j)$ and put $\Gamma_\epsilon\coloneqq B+\pi_*(N_Y+H_\epsilon)$. Then $\Gamma_\epsilon\geq B$ and
\begin{align*}
K_X+\Gamma_\epsilon
&=K_X+B+M_X+\sum_jc_j\divisor_X(u_j),\\
\pi^*(K_X+\Gamma_\epsilon)&=K_Y+B_Y+N_Y+H_\epsilon.
\end{align*}
In particular, $K_X+\Gamma_\epsilon$ is $\R$-Cartier and $\Gamma_\epsilon-B\sim_{\K}M_X$. Taking coefficients in the crepant equality gives $A_{X,\Gamma_\epsilon}(E) =A_{Y,B_Y+N_Y}(E)-\ord_E H_\epsilon$. The inequalities in \eqref{eq:absorption-bound} follow.
\end{proof}

Let $(X,B+\bM)$ be a generalized lc pair with $\bM$ descending to a semiample divisor $M_Y$ on a normal projective model $\pi\colon Y\to X$. If $(Y,B_Y)$ admits a projective log resolution, then Proposition \ref{prop:absorption}, with $P_Y=M_Y$ and $N_Y=0$, gives
\begin{equation}\label{eq:generalized-absorption}
(1-\epsilon)a_E(X,B,\bM)
\leq A_{X,\Gamma_\epsilon}(E)\leq a_E(X,B,\bM).
\end{equation}
The inequalities \eqref{eq:generalized-absorption} show that a prime divisor is an lc place of $(X,\Gamma_\epsilon)$ exactly when the prime divisor is a generalized lc place of $(X,B+\bM)$; in particular, generalized klt singularities give a klt pair $(X,\Gamma_\epsilon)$. 

\begin{lemma}\label{lem:big-replacement}
Let $(X,B+\bM)$ be a projective generalized klt pair over a field $k$, where $B$ is a $\K$-divisor and $\bM$ is a nef $\K$-Cartier b-divisor. Let $\pi\colon Y\to X$ be a normal projective model of descent.
\begin{enumerate}[(1)]
\item Suppose that $M_Y\sim_{\K}A_Y+F_Y$, where $A_Y$ is ample,
$F_Y\geq0$, and $A_Y$ and $F_Y$ are $\K$-Cartier $\K$-divisors. Assume that $B_Y$ and $F_Y$ admit a common projective log resolution. Then there exists a $\K$-divisor $\Gamma\geq B$ such that $(X,\Gamma)$ is klt and $K_X+\Gamma\sim_{\K}K_X+B+M_X$.
\item Suppose that $\pi^*(K_X+B+M_X)\sim_{\K}A_Y+F_Y$, where
$A_Y$ is ample, $F_Y\geq0$, and $A_Y$ and $F_Y$ are $\K$-Cartier $\K$-divisors. Assume that $B_Y$ and $F_Y$ admit a common projective log resolution. For every $\eta>0$, there exist a rational number $1<c<1+\eta$ and a $\K$-divisor $\Gamma\geq B$ such that $(X,\Gamma)$ is klt and $K_X+\Gamma\sim_{\K}c(K_X+B+M_X)$.
\end{enumerate}
\end{lemma}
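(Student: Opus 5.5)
Both parts are proved by the standard ``ample plus small effective'' trick, combined with Proposition \ref{prop:absorption} applied with a semiample (ample) positive part $P_Y$ and a small effective part $N_Y$.

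\textbf{Part (1).} For a rational $0<\delta<1$ to be fixed later, write
\[
M_Y\sim_{\K}(1-\delta)M_Y+\delta A_Y+\delta F_Y .
\]
The divisor $(1-\delta)M_Y+\delta A_Y$ is nef plus ample, hence ample, so it is semiample as a $\K$-Cartier $\K$-divisor (an ample $\R$-divisor is a positive combination of ample Cartier divisors). To match the hypothesis $M_Y=P_Y+N_Y$ of Proposition \ref{prop:absorption} literally, absorb the $\K$-principal difference into the positive part: set $N_Y\coloneqq\delta F_Y$ and $P_Y\coloneqq M_Y-\delta F_Y$. Then $P_Y\sim_{\K}(1-\delta)M_Y+\delta A_Y$ is still semiample.

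The crepant divisor $B_Y$ satisfies $A_{Y,B_Y}(E)=a_E(X,B,\bM)>0$ for all $E$. Take the common projective log resolution $W\to Y$ of $B_Y$ and $F_Y$. On $W$, only the finitely many coefficients of the SNC divisor matter, and every prime divisor $E$ over $Y$ has $\ord_E F_Y$ controlled by the SNC coefficients. Hence for $\delta$ small $(Y,B_Y+\delta F_Y)$ is sub-klt, and it admits the projective log resolution $W$.

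Proposition \ref{prop:absorption} then gives $\Gamma_\epsilon\geq B$ with $\Gamma_\epsilon-B\sim_{\K}M_X$ and $A_{X,\Gamma_\epsilon}(E)\geq(1-\epsilon)A_{Y,B_Y+\delta F_Y}(E)>0$. Since $B\geq0$, the pair $(X,\Gamma)$ with $\Gamma\coloneqq\Gamma_\epsilon$ is klt, and
\[
K_X+\Gamma\sim_{\K}K_X+B+M_X .
\]

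\textbf{Part (2).} Put $L_Y\coloneqq\pi^*(K_X+B+M_X)$ and $s\coloneqq c-1\in(0,\eta)$ rational and small. Apply Proposition \ref{prop:absorption} to a modified decomposition: define the new ``$M$'' on $Y$ to be $M'_Y\coloneqq M_Y+sL_Y$, so that
\[
\pi_*M'_Y=M_X+s(K_X+B+M_X), \qquad K_Y+B_Y+M'_Y=\pi^*\bigl(c(K_X+B+M_X)\bigr).
\]
In particular the crepant boundary $B_Y$ is unchanged. Decompose
\[
M'_Y\sim_{\K}(M_Y+sA_Y)+sF_Y ,
\]
with $M_Y+sA_Y$ ample, hence semiample, and $N_Y\coloneqq sF_Y$, again absorbing the principal part into $P_Y$. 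For $s$ small, $(Y,B_Y+sF_Y)$ is sub-klt by the same finite-coefficient argument on the common log resolution. Proposition \ref{prop:absorption} yields $\Gamma\geq B$ with $(X,\Gamma)$ klt and
\[
K_X+\Gamma\sim_{\K}K_X+B+\pi_*M'_Y=c(K_X+B+M_X).
\]

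\textbf{Main obstacle.} The delicate point is justifying that $(Y,B_Y+\delta F_Y)$ is sub-klt with a projective log resolution, uniformly over all $E$ over $Y$. The approach is to work on the common log resolution $W$, where
\[
A_{Y,B_Y+\delta F_Y}(E)=A_{W,B_W+\delta F_W}(E),
\]
and SNC support reduces the sub-klt condition to the coefficients of $B_W+\delta F_W$ being $<1$. Since $(Y,B_Y)$ is sub-klt, the coefficients of $B_W$ are $<1$, and only finitely many coefficients are involved, so a small $\delta$ works. One must also check that $\Gamma-B$ is a $\K$-divisor, which holds since $B$, $F_Y$ and the principal parts are $\K$-divisors. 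Finally, rationality of $c$ is needed when $\K=\Q$, which is why $s$ is taken rational.
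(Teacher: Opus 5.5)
Your proposal is correct and follows essentially the paper's proof. Like the paper, you choose a small rational $\delta$ (with $\delta<\eta$ in (2)) from the finitely many coefficients on the common log resolution so that $(Y,B_Y+\delta F_Y)$ is sub-klt, perturb the ample divisor $(1-\delta)M_Y+\delta A_Y$ (resp.\ $M_Y+\delta A_Y$) via Theorem~\ref{thm:main}, and push forward $\delta F_Y+H_Y$. Routing this through Proposition~\ref{prop:absorption} with the shifted divisor $M_Y+s\pi^*(K_X+B+M_X)$, whose crepant boundary is still $B_Y$, is the same computation packaged differently.
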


\begin{proof}
By assumption, there exists a common projective log resolution $\mu\colon V\to Y$. Write $K_V+B_V=\mu^*(K_Y+B_Y)$ and $\mu^*F_Y=\sum_i f_iE_i$. Since $(X,B+\bM)$ is generalized klt, $b_i\coloneqq\mult_{E_i}B_V<1$ for every $i$. Choose a rational number $0<\delta<1$ such that $\delta<\frac{1-b_i}{f_i}$ whenever $f_i>0$. For (2), we additionally require $\delta<\eta$. The divisor $B_V+\delta\mu^*F_Y$ has SNC support and every coefficient is less than one. Hence, $(Y,B_Y+\delta F_Y)$ is sub-klt.

For (1), the divisor $(1-\delta)M_Y+\delta A_Y$ is ample. By Theorem \ref{thm:main}, there exists an effective divisor $H_Y\sim_{\K}(1-\delta)M_Y+\delta A_Y$ such that $(Y,B_Y+\delta F_Y+H_Y)$ is sub-klt. Write $\delta F_Y+H_Y=M_Y+\Psi_Y$ with $\Psi_Y$ $\K$-principal. Set $\Gamma\coloneqq B+\pi_*(\delta F_Y+H_Y)$ and $\Psi_X\coloneqq\pi_*\Psi_Y$. Then $\pi^*\Psi_X=\Psi_Y$ and
\begin{align*}
K_X+\Gamma=K_X+B+M_X+\Psi_X,\quad \pi^*(K_X+\Gamma)=K_Y+B_Y+\delta F_Y+H_Y.
\end{align*}
Therefore, $(X,\Gamma)$ is klt, $\Gamma\geq B$, and $K_X+\Gamma\sim_{\K}K_X+B+M_X$.

For (2), the divisor $M_Y+\delta A_Y$ is ample. By Theorem \ref{thm:main}, there exists an effective divisor $H_Y\sim_{\K}M_Y+\delta A_Y$ such that $(Y,B_Y+\delta F_Y+H_Y)$ is sub-klt. There is a $\K$-principal divisor $\Psi_Y$ satisfying
\begin{equation}\label{eq:big-principal}
\delta F_Y+H_Y
=M_Y+\delta\pi^*(K_X+B+M_X)+\Psi_Y.
\end{equation}
Set $\Gamma\coloneqq B+\pi_*(\delta F_Y+H_Y)$, $\Psi_X\coloneqq\pi_*\Psi_Y$, and $c\coloneqq1+\delta$. Then we have
\begin{align}
K_X+\Gamma&=c(K_X+B+M_X)+\Psi_X,\label{eq:big-boundary}\\
\pi^*(K_X+\Gamma)&=K_Y+B_Y+\delta F_Y+H_Y.\label{eq:big-crepant}
\end{align}
Since $(Y,B_Y+\delta F_Y+H_Y)$ is sub-klt, \eqref{eq:big-crepant} gives $(X,\Gamma)$ klt. Equation \eqref{eq:big-boundary} gives $K_X+\Gamma\sim_{\K}c(K_X+B+M_X)$. The construction gives $\Gamma\geq B$ and $1<c<1+\eta$.
\end{proof}

For threefolds over an $F$-finite field of positive characteristic, the common projective log resolutions in Lemma \ref{lem:big-replacement} exist by \cite[Theorem 2.7]{DW22}.

\begin{proof}[Proof of Theorem \ref{thm:threefold}]
For (1), set $T_Y\coloneqq M_Y+t\pi^*(K_X+B+M_X)$. The b-divisor $\bM_t$ determined by $T_Y$ is nef, and
\begin{equation}\label{eq:shifted-crepant}
K_Y+B_Y+T_Y=(1+t)\pi^*(K_X+B+M_X).
\end{equation}
Pushing forward \eqref{eq:shifted-crepant} gives $K_X+B+(\bM_t)_X=(1+t)(K_X+B+M_X)$. The crepant boundary on $Y$ is $B_Y$ for both $(X,B+\bM)$ and $(X,B+\bM_t)$. Hence, $a_E(X,B,\bM_t)=a_E(X,B,\bM)>0$ for every prime divisor $E$ over $X$. If $T_Y$ is semiample, then Proposition \ref{prop:absorption}, with $P_Y=T_Y$ and $N_Y=0$, gives a klt pair $(X,\Gamma)$ with $\Gamma\geq B$ and $K_X+\Gamma\sim_{\K}(1+t)(K_X+B+M_X)$. If $T_Y$ is nef and big, then Lemma \ref{lem:big-replacement}(1) for $(X,B+\bM_t)$ gives a klt pair $(X,\Gamma)$ satisfying $\Gamma\geq B$ and $K_X+\Gamma\sim_{\K}(1+t)(K_X+B+M_X)$. By \cite[Theorem 2.7]{DW22}, the projective log resolutions exist. 

For (2), Lemma \ref{lem:big-replacement}(2) gives a klt pair $(X,\Gamma)$ with $\Gamma\geq B$ and $K_X+\Gamma\sim_{\K}c(K_X+B+M_X)$ for some $c\in\Q_{>1}$. In parts (1) and (2), there are $c\in\K_{>0}$ and a $\K$-principal divisor $\Psi_X$ such that
\begin{equation}\label{eq:boundary-reduction}
K_X+\Gamma=c(K_X+B+M_X)+\Psi_X.
\end{equation}
In particular, $K_X+\Gamma$ is pseudoeffective.

Let $k_X\coloneqq H^0(X,\OO_X)$. Since $X$ is integral and projective over $k$, the field extension $k_X/k$ is finite. In particular, $k_X$ is $F$-finite. The morphism $X\to\Spec k_X$ is a projective contraction. By \cite[Theorem 1.6]{DW22}, there is a $\Q$-factorial minimal model $(X',\Gamma')$ with a birational map $\phi\colon X\dashrightarrow X'$. 

Choose a regular projective common model $p\colon W\to X$ and $q\colon W\to X'$ dominating a model of descent of $\bM$. Let $R_W\coloneqq p^*(K_X+\Gamma)-q^*(K_{X'}+\Gamma')$. For a prime divisor $E$ on $X$ not contracted by $\phi$, $\mult_E p_*R_W=0$. For a prime divisor $E$ on $X$ contracted by $\phi$, the definition of a minimal model gives $\mult_E p_*R_W=A_{X',\Gamma'}(E)-A_{X,\Gamma}(E)>0$. Consequently, $p_*R_W\geq0$. For every curve $C$ contracted by $p$, $(-R_W)\cdot C=q^*(K_{X'}+\Gamma')\cdot C\geq0$. The negativity lemma gives $R_W\geq0$.

If a prime divisor $E$ on $X'$ is extracted by $\phi$, then $\mult_E\Gamma'=1$. For the strict transform $E_W$ on $W$, the definition of $R_W$ gives
\[
\mult_{E_W}R_W =A_{X',\Gamma'}(E)-A_{X,\Gamma}(E) =-A_{X,\Gamma}(E)<0,
\]
contrary to $R_W\geq0$. Thus, $\phi$ is a birational contraction and $\Gamma'=\phi_*\Gamma$. Since $W$ is regular and $R_W\geq0$, the pullback of $R_W$ to every higher model is effective. Therefore, $A_{X',\Gamma'}(E)\geq A_{X,\Gamma}(E)>0$ for every prime divisor $E$ over $X$. Hence, $(X',\Gamma')$ is klt. Since $\phi$ is a birational contraction and $\Gamma'=\phi_*\Gamma$, the coefficient of $R_W$ along the strict transform of every prime divisor on $X'$ is zero. Hence, $q_*R_W=0$. Moreover, $R_W$ has positive coefficient along the strict transform of every prime divisor on $X$ contracted by $\phi$.

Let $B'\coloneqq\phi_*B$. Write $\Psi_X=\sum_jc_j\divisor_X(u_j)$ and put $\Psi_{X'}\coloneqq\sum_jc_j\divisor_{X'}(u_j)$. Pushing forward \eqref{eq:boundary-reduction} gives $K_{X'}+\Gamma'=c(K_{X'}+B'+M_{X'})+\Psi_{X'}$. Consequently, $K_{X'}+B'+M_{X'}$ is $\K$-Cartier and nef. Since $p^*\Psi_X=q^*\Psi_{X'}$, we have
\begin{equation}\label{eq:minimal-transfer}
R_W=c\bigl(p^*(K_X+B+M_X)-q^*(K_{X'}+B'+M_{X'})\bigr).
\end{equation}
Let $M_W$ be the trace of $\bM$ on $W$, and define $B_W$ and $B'_W$ by
\begin{align*}
K_W+B_W+M_W=p^*(K_X+B+M_X),\quad K_W+B'_W+M_W=q^*(K_{X'}+B'+M_{X'}).
\end{align*}
Combining the two crepant equalities with \eqref{eq:minimal-transfer} gives $B_W-B'_W=\frac{1}{c}R_W$. Thus,
\begin{equation}\label{eq:generalized-increase}
a_E(X',B',\bM)-a_E(X,B,\bM)
=\frac{1}{c}\ord_E R_W\geq0.
\end{equation}
For every prime divisor on $X$ contracted by $\phi$, the inequality in \eqref{eq:generalized-increase} is strict. Thus, $(X',B'+\bM)$ is a generalized klt minimal model.

For the good minimal model and finite generation conclusions in (2), assume that $\K=\Q$. Then $K_X+B+M_X$ and $K_{X'}+B'+M_{X'}$ are $\Q$-Cartier. The divisor $K_{X'}+\Gamma'$ is nef and big. Let $L'\coloneqq2(K_{X'}+\Gamma')$. Then $L'$ and $L'-(K_{X'}+\Gamma')=K_{X'}+\Gamma'$ are nef and big. By applying \cite[Theorem 1.4]{DW22} to the projective contraction $X'\to\Spec k_X$, the divisor $L'$ is semiample, and hence $K_{X'}+\Gamma'$ is semiample. Since $K_{X'}+\Gamma'\sim_{\Q}c(K_{X'}+B'+M_{X'})$, $K_{X'}+B'+M_{X'}$ is semiample.

If $K_X+B+M_X$ is nef and big, then $K_X+\Gamma\sim_{\Q}c(K_X+B+M_X)$ is nef and big. Let $L\coloneqq2(K_X+\Gamma)$. Then $L$ and $L-(K_X+\Gamma)=K_X+\Gamma$ are nef and big. By applying \cite[Theorem 1.4]{DW22} to the projective contraction $X\to\Spec k_X$, the divisor $L$ is semiample; hence $K_X+\Gamma$, and therefore $K_X+B+M_X$, is semiample.

For finite generation, let $F_W\coloneqq\frac{1}{c}R_W$. Then $p^*(K_X+B+M_X)=q^*(K_{X'}+B'+M_{X'})+F_W$, where $F_W\geq0$ is $q$-exceptional. Choose an integer $r>0$ such that $r(K_X+B+M_X)$, $r(K_{X'}+B'+M_{X'})$, and $rF_W$ are Cartier. For every $\ell\geq0$, normality of $X'$ and exceptionality of $F_W$ give $q_*\OO_W(\ell rF_W)=\OO_{X'}$. Tensoring with $\OO_{X'}(\ell r(K_{X'}+B'+M_{X'}))$ and taking sections gives
\begin{align*}
H^0\!\left(X,\OO_X\bigl(\ell r(K_X+B+M_X)\bigr)\right)
&\simeq H^0\!\left(W,\OO_W\bigl(\ell rp^*(K_X+B+M_X)\bigr)\right)\\
&\simeq H^0\!\left(X',\OO_{X'}\bigl(\ell r(K_{X'}+B'+M_{X'})\bigr)\right).
\end{align*}
Both isomorphisms are induced by the identity on the common function field $k(X)=k(X')$ and are compatible with multiplication. Hence,
\[
R\!\left(X,r(K_X+B+M_X)\right) \simeq R\!\left(X',r(K_{X'}+B'+M_{X'})\right).
\]
For sufficiently divisible $r$, semiampleness and Stein factorization give a projective contraction $h\colon X'\to Z$ and an ample Cartier divisor $A_Z$ satisfying $r(K_{X'}+B'+M_{X'})\sim h^*A_Z$ and $h_*\OO_{X'}=\OO_Z$. Therefore, $R(X',r(K_{X'}+B'+M_{X'}))\simeq \bigoplus_{\ell\geq0}H^0(Z,\OO_Z(\ell A_Z))$, and the section ring is finitely generated by \cite[Tag 0B5T(1)]{Stacks}.
\end{proof}

Corollaries \ref{cor:arbitrary-field-semiample} and \ref{cor:lc-semiample} give two semiampleness consequences of the boundary constructions.

\begin{corollary}\label{cor:arbitrary-field-semiample}
Let $(X,B+\bM)$ be a projective generalized klt threefold over an arbitrary field $k$ of characteristic $p>5$, with $B$ a $\Q$-divisor and $\bM$ a nef $\Q$-Cartier b-divisor. Suppose that $K_X+B+M_X$ is $\Q$-Cartier, nef and big and that a Kodaira decomposition on a model of descent admits the common projective log resolution required in Lemma \ref{lem:big-replacement}(2). Then $K_X+B+M_X$ is semiample.
\end{corollary}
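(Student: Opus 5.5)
The plan is to reduce to an ordinary klt pair over $k$ by Lemma \ref{lem:big-replacement}(2), and then prove semiampleness of the resulting nef and big log canonical divisor. Semiampleness over the imperfect field $k$ will be obtained by passing to an $F$-finite field and descending.

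\textbf{Step 1.} Since $K_X+B+M_X$ is big, its pullback to the model of descent $Y$ admits a Kodaira decomposition $\pi^*(K_X+B+M_X)\sim_{\Q}A_Y+F_Y$, with $A_Y$ ample and $F_Y\geq0$. By hypothesis $B_Y$ and $F_Y$ have a common projective log resolution. Lemma \ref{lem:big-replacement}(2) is valid over an arbitrary field, because it rests only on Theorem \ref{thm:main}. It therefore gives a klt pair $(X,\Gamma)$ over $k$ with $\Gamma\geq B$ a $\Q$-divisor, together with a rational number $c>1$ such that $K_X+\Gamma\sim_{\Q}c(K_X+B+M_X)$. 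In particular, $K_X+\Gamma$ is $\Q$-Cartier, nef and big, and it suffices to show that $K_X+\Gamma$ is semiample.

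\textbf{Step 2.} If $k$ were $F$-finite, we could finish as in the proof of Theorem \ref{thm:threefold}(2). We would put $L\coloneqq 2(K_X+\Gamma)$, note that $L-(K_X+\Gamma)$ is nef and big, and apply \cite[Theorem 1.4]{DW22} to $X\to\Spec k_X$. For an arbitrary $k$, I would instead base change to an $F$-finite field $K=k^\Gamma$ obtained from Murayama's $\Gamma$-construction \cite[Corollary 3.7(i)]{Mur21}, exactly as in the last part of the proof of Proposition \ref{prop:regular}. The cofinite subset of the $p$-basis is to be chosen so that $X_K$ is normal and the log resolution $V$ of $(Y,B_Y,F_Y)$, together with its SNC strata, stays regular after base change. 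Then $V_K\to X_K$ is a log resolution. Pulling back the crepant formula, $(X_K,\Gamma_K)$ has log discrepancies computed on $V_K$ with the same coefficients. Hence it is klt, because $\Gamma$ has coefficients less than one and the exceptional coefficients are unchanged.

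\textbf{Step 3.} The divisor $K_{X_K}+\Gamma_K=g^*(K_X+\Gamma)$ is nef and big on the $F$-finite threefold $X_K$. Then \cite[Theorem 1.4]{DW22} (which needs $p>5$) shows that it is semiample. Semiampleness descends along the faithfully flat field extension $K/k$. Indeed, $H^0(X_K,\OO(mL_K))=H^0(X,\OO(mL))\otimes_kK$ by flat base change, so the base locus of $|mL|$ pulls back to that of $|mL_K|$ and is therefore empty. Consequently $K_X+\Gamma$ is semiample, and hence so is $K_X+B+M_X$.

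The main obstacle is Step 2. We must ensure that after the purely inseparable base change $X_K$ is still normal and the pair remains klt; Example \ref{ex:inseparable-base-change} shows this can fail in general. I would first try to use the freedom in the cofinite set $\Gamma$ to make $Y$, $V$ and all relevant strata geometrically regular over $K$, and then push forward the crepant formula. If normality of $X_K$ is problematic, the fallback is to work with the normalization of $X_K$ and run the discrepancy comparison there.
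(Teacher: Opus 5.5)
Your Steps 1 and 3 are fine, but Step 2 has a genuine gap, so the proof does not go through as written.

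\textbf{The gap in Step 2.} The boundary from Lemma \ref{lem:big-replacement}(2) is $\Gamma=B+\pi_*(\delta F_Y+H_Y)$, where $H_Y\sim_{\Q}M_Y+\delta A_Y$ is the perturbing divisor supplied by Theorem \ref{thm:main}. Your resolution $V$ only makes the exceptional divisors and the components of $B_Y$ and $F_Y$ SNC. The components of $\Supp H_Y$ are not part of that SNC divisor, so $V\to X$ is not a log resolution of $(X,\Gamma)$. The klt property of $(X,\Gamma)$ therefore cannot be read off from coefficients on $V$; it is only known through the construction in Theorem \ref{thm:main}. Hence ``coefficients less than one and exceptional coefficients unchanged'' says nothing about $(X_K,\Gamma_K)$.

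This is exactly how purely inseparable base change fails. In Example \ref{ex:inseparable-base-change}, $(\A^1_k,\frac{1}{2}P)$ is klt, while $P_K=p[\sqrt[p]{a}]$ gets coefficient $\frac{p}{2}$ after base change. Components of $\Supp H_Y$ can likewise become nonreduced or more singular over $K$. No log resolution of $(Y,B_Y+\delta F_Y+H_Y)$ is assumed, so you cannot choose the cofinite set against it.

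Your fallback to the normalization goes the wrong way. In \eqref{eq:inseparable-discrepancy}, the terms $(e_F-1)$ and $c_F$ enter with a negative sign. So klt descends from $X_K$ to $X$ (Lemma \ref{lem:inseparable}(1)) but need not ascend.

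\textbf{A repair.} Base change before building the boundary. Choose the cofinite set so that $X_K$ and $Y_K$ are normal, and $V_K$ and all strata of its SNC divisor are regular. Then $K_{V_K}=g^*K_V$ and each component of that divisor pulls back with multiplicity one. So $(X_K,B_K+\bM_K)$, with $\bM_K$ descending to $(M_Y)_K$, is a generalized klt threefold over the $F$-finite field $K$. Moreover, $(A_Y)_K+(F_Y)_K$ is a Kodaira decomposition and $V_K$ is the required common log resolution. Now run Lemma \ref{lem:big-replacement}(2) over $K$, apply \cite[Theorem 1.4]{DW22}, and finish with your descent of semiampleness from Step 3.

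\textbf{Comparison with the paper.} The paper does no base change at all. After your Step 1 it sets $L=2(K_X+\Gamma)$ and applies \cite[Theorem 1.4]{DW22} directly over $k$ to the projective contraction $X\to\Spec k_X$. Your detour through an $F$-finite field is only needed if you insist on invoking that theorem over an $F$-finite base.
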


\begin{proof}
Lemma \ref{lem:big-replacement}(2) gives a klt pair $(X,\Gamma)$ with $K_X+\Gamma\sim_{\Q}c(K_X+B+M_X)$ for some $c\in\Q_{>1}$. Let $L\coloneqq2(K_X+\Gamma)$. The divisors $L$ and $L-(K_X+\Gamma)=K_X+\Gamma$ are nef and big. Set $k_X\coloneqq H^0(X,\OO_X)$. The morphism $X\to\Spec k_X$ is a projective contraction. By applying \cite[Theorem 1.4]{DW22} to this contraction, the divisor $L$ is semiample. Hence, $K_X+\Gamma$ and $K_X+B+M_X$ are semiample.
\end{proof}

\begin{corollary}\label{cor:lc-semiample}
Let $(X,B+\bM)$ be a projective generalized lc threefold over an $F$-finite field $k$ of characteristic $p>5$, with $B$ a $\Q$-divisor and $\bM$ a b-semiample nef $\Q$-Cartier b-divisor. If $K_X+B+M_X$ is nef and big, then $K_X+B+M_X$ is semiample.
\end{corollary}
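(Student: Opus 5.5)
We reduce Corollary \ref{cor:lc-semiample} to the klt basepoint-free theorem for threefolds by turning the generalized lc pair into an ordinary lc pair with the same log canonical divisor. Since $\bM$ is b-semiample, it descends to a semiample $\Q$-Cartier divisor $M_Y$ on some normal projective model $\pi\colon Y\to X$. By \cite[Theorem 2.7]{DW22}, we may take $Y$ regular and dominating a log resolution, so $(Y,B_Y)$ admits a projective log resolution; after replacing $Y$ by a higher model, the trace stays semiample. Proposition \ref{prop:absorption} with $P_Y=M_Y$ and $N_Y=0$ then gives an lc pair $(X,\Gamma)$ with $\Gamma\geq B$, $\Gamma-B$ a $\Q$-divisor, and $\Gamma-B\sim_{\Q}M_X$. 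Here lc holds because $B\geq0$ and $A_{Y,B_Y}(E)=a_E(X,B,\bM)\geq0$. Thus $K_X+\Gamma\sim_{\Q}K_X+B+M_X$ is nef and big, and $\Gamma$ is a $\Q$-divisor.

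Next, we show that $K_X+\Gamma$ is semiample for a projective lc threefold pair over the $F$-finite field $k_X\coloneqq H^0(X,\OO_X)$, which is finite over $k$. The base point free theorem \cite[Theorem 1.4]{DW22} used earlier is a klt statement, so the main obstacle is the lc case. The paper appears to provide no lc abundance or basepoint-free theorem over imperfect fields. I would therefore try to stay within the klt world, as follows.

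Fix a small $\delta>0$. The divisor $K_X+B+M_X$ is big, so on $Y$ we write $\pi^*(K_X+B+M_X)\sim_{\Q}A_Y+F_Y$ with $A_Y$ ample and $F_Y\geq0$. Consider $(1-\delta)B$ in place of $B$. Then $(X,(1-\delta)B+\bM)$ is not automatically generalized klt, since $B_Y$ may have exceptional coefficients equal to $1$, and lowering $B$ alone does not fix those. Instead, I would absorb part of the big divisor. The generalized boundary $B_Y-\delta F_Y+\delta(\text{stuff})$ does not decrease coefficients on exceptional divisors lying in $\Supp F_Y$, so this route requires $F_Y$ to contain all non-klt places. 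That fails in general.

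The fallback is to accept the lc case. I would apply the lc basepoint-free/abundance result for threefolds in characteristic $p>5$ over $F$-finite fields, as in the framework of \cite{DW22}, to the nef and big divisor $K_X+\Gamma$. I would then transfer semiampleness back via $K_X+B+M_X\sim_{\Q}K_X+\Gamma$.

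In summary, the key steps are the descent to a semiample trace, the absorption into an lc boundary via Proposition \ref{prop:absorption}, and an lc semiampleness theorem for nef and big $K_X+\Gamma$. The last step is where the real difficulty lies, and I expect the argument to invoke an external lc result of Das--Waldron type rather than the klt theorem used for Corollary \ref{cor:arbitrary-field-semiample}.
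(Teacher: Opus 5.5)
Your proposal follows the paper's proof: descend $\bM$ to a semiample $M_Y$, get a projective log resolution of $(Y,B_Y)$ from \cite[Theorem 2.7]{DW22}, apply Proposition \ref{prop:absorption} with $P_Y=M_Y$ and $N_Y=0$ to obtain an lc pair $(X,\Gamma)$ with $K_X+\Gamma\sim_{\Q}K_X+B+M_X$, and then invoke an external lc semiampleness theorem for the projective contraction $X\to\Spec H^0(X,\OO_X)$. The external result the paper actually cites is \cite[Theorem 5.4]{Pos23}, not the klt base point free theorem \cite[Theorem 1.4]{DW22}, so the result you need does exist and your abandoned klt detour through $(1-\delta)B$ can be dropped.
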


\begin{proof}
Choose a model of descent $\pi\colon Y\to X$ with $M_Y$ semiample. By \cite[Theorem 2.7]{DW22}, $(Y,B_Y)$ admits a projective log resolution. Proposition \ref{prop:absorption} gives an lc pair $(X,\Gamma)$ with $K_X+\Gamma\sim_{\Q}K_X+B+M_X$. The morphism $X\to\Spec H^0(X,\OO_X)$ is a projective contraction. Hence, \cite[Theorem 5.4]{Pos23} gives $K_X+\Gamma$ semiample, and $K_X+B+M_X$ is semiample.
\end{proof}

\bibliographystyle{habbvr}
\bibliography{biblio}

\end{document}